\newcommand{\lvt}{\left|\kern-1.35pt\left|\kern-1.3pt\left|}
\newcommand{\rvt}{\right|\kern-1.3pt\right|\kern-1.35pt\right|}
\newtheorem{thm}{Theorem}
\newtheorem{lem}{Lemma}
\newtheorem{THEO}{Theorem}
\newtheorem{LEM}{Lemma}
\theoremstyle{remark}
\newcommand{\bddots}{%
  \mathinner{\mkern1mu\raise\p@\vbox{\kern7\p@\hbox{.}}\mkern2mu
    \raise4\p@\hbox{.}\mkern2mu\raise7\p@\hbox{.}\mkern1mu}}
 \def \to {\rightarrow}
\def \N {\mathbb{N}}
\def \R {\mathbb{R}}
\def \C {\mathbb{C}}
\begin{document}

\title[Zeros of Dirichlet Polynomials via a Density Criterion]
{Zeros of Dirichlet Polynomials via a Density Criterion}

\author{Willian D. Oliveira}
\address{Departamento de Matem\'atica Aplicada\\
 IBILCE, Universidade Estadual Paulista\\
 15054-000 Sa\~{o} Jos\'e do Rio Preto, SP, Brazil.}
 \email{willian@ibilce.unesp.br}
 
 \thanks{Research supported by the Science Foundation of The State of S\~ao Paulo, Brazil, FAPESP, under Grants 2013/14881-9 and 2016/09906-0.}

\keywords{The Nyman-Beurling criterion, B\'aez-Duarte's criterion, Dirichlet polynomial, semi-plane free of zeros, Lubinsky's Dirichlet orthogonal polynomials}
\subjclass[2010]{11M06, 11M26}

\begin{abstract} 
We obtain a necessary and sufficient condition in order that a semi-plane of the form $\Re(s)>r$, $r\in \R$, is free of zeros of a given Dirichlet 
polynomial. The result may be considered a natural generalization of a well-known criterion for the truth of the Riemann hypothesis due to B\'aez-Duarte.
An analog for the case of Dirichlet polynomials of a result of Burnol which is closely related to B\'aez-Duarte's one is also established.   
\end{abstract}

\maketitle

\section{Introduction and statement of results}
\setcounter{equation}{0}

A series of recent results were devoted to the connection between zero-free regions of some Dirichlet series and density of certain spaces of functions in $L^2(0,\infty)$. 
The majority of the papers concern the Riemann zeta function because of its importance in the study of the distribution of primes. 

The space $\mathcal{B}$ of Beurling functions is defined by
\begin{equation}
\label{C}
\mathcal{B}=\left\{ h:(0,1)\to \C \ | \ h(x)=\sum_{k=1}^{n}b_k \left\{\frac{1}{\theta_kx}\right\},  \ b_k \in \C,  \ \theta_k \geq1, \  \sum b_k/\theta_k=0 \right\},
\end{equation}
where $\{x\}=x-[x]$ stands for the fractional part of $x$. If $\mathcal{B}^p$ denotes the closure of $\mathcal{B}$ in $L^p(0,1)$, the following classical result holds:
\begin{THEO}
\label{NB}
The Riemann zeta function $\zeta(s)$ does not vanish in the semi-plane  $\Re(s)>1/p$ if and only if  $\mathcal{B}^p=L^p (0, 1)$.
\end{THEO}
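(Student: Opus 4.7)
I would attack this through the Mellin transform, which provides the natural bridge between $L^p(0,1)$ and analytic functions on a half-plane. For $f\in L^p(0,1)$ extended by $0$, set $\tilde f(s)=\int_0^1 f(x)x^{s-1}dx$; H\"older's inequality gives that $f\mapsto\tilde f(s)$ is continuous on $L^p$ for each fixed $s$ with $\Re(s)>1/p$, and $\tilde f$ is holomorphic on that half-plane. The first concrete step is to compute $\tilde h$ for a generic $h\in\CB$. A change of variables $u=1/(\theta x)$ together with the classical formula $\zeta(s)/s=\int_1^\infty [u]u^{-s-1}\,du$ should yield
\[
\int_0^1\Big\{\tfrac{1}{\theta x}\Big\}x^{s-1}\,dx\;=\;\frac{1}{\theta(s-1)}\;-\;\frac{\theta^{-s}\zeta(s)}{s}.
\]
Summing over $k$ and invoking the constraint $\sum_k b_k/\theta_k=0$ then cancels the pole at $s=1$ and leaves the clean identity
\[
\tilde h(s)\;=\;-\frac{\zeta(s)}{s}\,D_h(s),\qquad D_h(s):=\sum_k b_k\theta_k^{-s},
\]
a Dirichlet polynomial satisfying $D_h(1)=0$. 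This identity is the bridge between $\CB^p$ and the zeros of $\zeta$.

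The direction ``$\CB^p=L^p(0,1)\Rightarrow\zeta$ zero-free'' should follow almost immediately. I would approximate $\chi_{(0,1)}$ in $L^p$-norm by $h_n\in\CB$; since the Mellin transform of $\chi_{(0,1)}$ equals $1/s$, continuity of evaluation forces $-\zeta(s_0)D_{h_n}(s_0)/s_0\to 1/s_0$ for every $s_0$ with $\Re(s_0)>1/p$. A zero $\zeta(s_0)=0$ would make the left side vanish for every $n$, contradicting $1/s_0\neq 0$.

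For the converse I would argue by Hahn--Banach duality: $\CB$ is dense in $L^p(0,1)$ if and only if the only $g\in L^q(0,1)$ ($1/p+1/q=1$) which is $L^p$-orthogonal to $\CB$ is $g=0$. The orthogonality relations, combined with the sawtooth computation above, become a Mellin-side condition asserting that the product $\zeta(s)\tilde g(1-s)/s$ is annihilated (in a suitable Parseval pairing) by every Dirichlet polynomial $D$ with $D(1)=0$. The hypothesis that $\zeta$ has no zeros on $\Re(s)>1/p$ is used crucially here: it lets one formally divide out $\zeta$ and conclude that $\tilde g(1-s)/s$ must vanish, whence $g=0$ by injectivity of the Mellin transform. \emph{The main obstacle} is making this last step rigorous away from the Hilbert-space case $p=2$: for general $p$, Plancherel has to be replaced by boundary-value arguments in Hardy-type spaces on the half-plane $\Re(s)>1/p$, and one needs a careful density/approximation scheme to exploit the full family of Dirichlet polynomials vanishing at $s=1$ in place of a direct orthogonal projection.
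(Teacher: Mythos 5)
This statement is the classical Nyman--Beurling criterion; the paper only quotes it (attributing $p=2$ to Nyman, $p>1$ to Beurling, and $p=1$ to Bercovici--Foias) and contains no proof of it, so there is no internal argument to compare against. Judged on its own terms, your proposal handles the easy implication correctly and completely: the identity
\[
\int_0^1\Big\{\tfrac{1}{\theta x}\Big\}x^{s-1}\,dx=\frac{1}{\theta(s-1)}-\frac{\theta^{-s}\zeta(s)}{s}
\]
is right, the constraint $\sum_k b_k/\theta_k=0$ does cancel the pole at $s=1$ and yields $\tilde h(s)=-\zeta(s)D_h(s)/s$, and since $x^{\overline{s}-1}\in L^q(0,1)$ precisely when $\Re(s)>1/p$, evaluation at a hypothetical zero $s_0$ of $\zeta$ in that half-plane gives the contradiction you describe.

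The converse is where the entire content of the theorem lives, and your sketch does not close it; you have correctly located the obstacle but not removed it. Pointwise non-vanishing of $\zeta$ on $\Re(s)>1/p$ does not license ``dividing out $\zeta$'' in the duality computation, because $\zeta$ is not bounded below near the boundary line $\Re(s)=1/p$, so knowing that $\zeta(s)\Phi(s)$ lies in a Hardy-type class does not formally put $\Phi$ in that class. Moreover, the annihilator condition is not naive orthogonality to all Dirichlet polynomials with $D(1)=0$: if $g\in L^q$ annihilates $\CB$, what one actually extracts (by taking two-term combinations) is that $\theta\mapsto\theta\int_0^1\{1/(\theta x)\}\overline{g(x)}\,dx$ is constant on $[1,\infty)$, and deducing $g=0$ from this requires identifying the closed span of $\CB$ as a subspace invariant under the multiplicative semigroup of dilations and invoking Beurling's structure theorem for such subspaces (his 1953 \emph{Math.\ Scand.}\ paper, which is in the bibliography), or, for $p=2$, an explicit inner--outer factorization argument showing that a proper invariant subspace would force a zero of $\zeta$ in the half-plane. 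That machinery is the missing idea; the Hahn--Banach reduction plus ``injectivity of the Mellin transform'' is not sufficient. (For contrast, the paper's Theorem 1 proves the $p=2$ Dirichlet-polynomial analogue constructively, by exhibiting explicit approximants built from the coefficients $\mu_P(k)$ of $1/P$ and controlling them via Perron's formula and dominated convergence --- a route that avoids invariant-subspace theory but is specific to the Hilbert-space setting.)
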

\noindent Nyman \cite{Nym} proved the result for $p=2$ in 1950 and Beurling \cite{Beu55} generalised it for $p>1$ in 1955. That is why 
nowadays the statement of the theorem is commonly known as the Nyman-Beurling criterion. 
Bercovici and Foias \cite{BerFoi} settled  the case $p=1$ in 1984. It is clear that  it does not hold for $p>2$. Since then various results related to Theorem \ref{NB} have been obtained \cite{B-D93, B-D05, BBLS, BS, BalRot, BetConFar, Bur, DFMR13, DKDWDO, Rot05, Rot07, Rot09, Nik}. For $p=2$ 
the above theorem provides a criterion for the truth of the Riemann hypothesis (RH). 
 
Given a real number $\lambda \geq1$, consider the subspace $\mathcal{B}_\lambda \subset \mathcal{B}$ which consists of the functions $f\in \mathcal{B}$, such that $\theta_k\leq  \lambda$ for every $k=1, \dots, n$, and denote by $D_{\lambda,p}$ the distance in  
$L^p(0,1)$ from the characteristic function $\textbf{1}_{(0,1)}$ of the interval $(0,1)$ to the the space $\mathcal{B}_\lambda$. Since Beurling \cite{Beu55} also showed that  $\mathcal{B}^p=L^p (0, 1)$ if and only if  $\textbf{1}_{(0,1)} \in \mathcal{B}^p$, then the above 
theorem can be reformulated as\\

\noindent \textbf{Theorem A}\textit{
The Riemann zeta function $\zeta(s)$ does not vanish in the semi-plane  $\Re(s)>1/p$ if and only if  
$$\lim_{\lambda \to \infty} D_{\lambda,p}=0.$$}

B\'aez-Duarte \cite{B-D03} showed that when $p=2$ the conditions $\theta_k\geq1$ in (\ref{C}) can be reduced to $\theta_k\in \N$ and, if we  substitute $L^2(0,1)$ by $L^2(0,\infty)$,
the restriction $\sum b_k/\theta_k=0$ can be removed. B\'aez-Duarte's refinements yield the following nice criterion for the Riemann hypothesis in terms of approximation of the 
characteristic function $\textbf{1}_{(0,1)}$: 
\begin{THEO}
\label{B}
The RH holds if and only if 
$\lim_{n \to \infty}d_n=0$, where
$$
d_n^2=\inf_{{b_1,\ldots,b_n\in \C}\atop}\int_0^\infty \left|\textbf{1}_{(0,1)}-\sum_{k=1}^n b_k \left\{\frac{1}{kx}\right\}\right|^2 dx.
$$
\end{THEO}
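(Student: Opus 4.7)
The plan is to reduce Theorem B to Theorem A, exploiting a simple decomposition of the $L^2(0,\infty)$ norm and, for the harder direction, the Mellin--Plancherel isometry. The starting observation is an algebraic identity: for every $x>1$ and every $k\geq 1$ one has $0<1/(kx)<1$, hence $\{1/(kx)\}=1/(kx)$. Therefore $\sum_{k=1}^n b_k \{1/(kx)\}=(1/x)\sum_k b_k/k$ on $(1,\infty)$, and splitting the integral at $x=1$ yields
\[
\int_0^\infty \left|\mathbf{1}_{(0,1)}-\sum_k b_k \{1/(kx)\}\right|^2 dx
= \int_0^1 \left|1-\sum_k b_k \{1/(kx)\}\right|^2 dx + \left|\sum_k b_k/k\right|^2.
\]

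For the direction $(d_n\to 0 \Rightarrow \mathrm{RH})$, pick coefficients $b_k^{(n)}$ realizing $d_n^2$ up to an $o(1)$ error. Both summands in the decomposition must tend to zero; in particular $c_n:=\sum_k b_k^{(n)}/k\to 0$. Define $\tilde b_1:=b_1^{(n)}-c_n$ and $\tilde b_k:=b_k^{(n)}$ for $k\geq 2$, so that the new coefficients satisfy $\sum_k \tilde b_k/k=0$; since $\{1/x\}\in L^2(0,1)$, the triangle inequality gives
\[
\left\|1-\sum_k \tilde b_k \{1/(kx)\}\right\|_{L^2(0,1)}
\leq \left\|1-\sum_k b_k^{(n)}\{1/(kx)\}\right\|_{L^2(0,1)} + |c_n|\,\|\{1/x\}\|_{L^2(0,1)} \to 0.
\]
Since the modified sums lie in $\mathcal{B}_n$ (choose $\lambda=n$), this shows $D_{n,2}\to 0$, and Theorem A yields the Riemann hypothesis.

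The converse is the main obstacle, because Theorem A permits arbitrary real frequencies $\theta_k\in[1,\lambda]$ whereas Theorem B demands integer frequencies. The natural route is the Mellin--Plancherel isometry on $L^2(0,\infty)$: using $M(\mathbf{1}_{(0,1)})(s)=1/s$ and the classical identity $M(\{1/x\})(s)=-\zeta(s)/s$ valid on $0<\Re(s)<1$, the expression for $d_n^2$ becomes
\[
d_n^2 = \frac{1}{2\pi}\inf_{b_1,\ldots,b_n}\int_{-\infty}^\infty\frac{|1+\zeta(s)A_n(s)|^2}{|s|^2}\,dt, \qquad s=\tfrac{1}{2}+it,\ A_n(s)=\sum_{k=1}^n b_k k^{-s}.
\]
Under RH, $1/\zeta(s)$ has moderate growth on $\Re(s)\geq 1/2$ (quantitatively $M(x)=\sum_{k\leq x}\mu(k)=O(x^{1/2+\varepsilon})$), and one constructs explicit Dirichlet polynomials --- typically M\"obius-weighted truncations such as $b_k=-\mu(k)\psi(k/n)$ for a suitable smooth cut-off $\psi$ --- so that $A_n(s)$ approximates $-1/\zeta(s)$ in the weighted norm $dt/|s|^2$ on the critical line. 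The technical heart of the argument, and the main hurdle I expect, is controlling this weighted integral uniformly in $t$ to absorb the (possibly large) values of $|\zeta(s)|$ near its critical-line zeros and to obtain a genuine rate $d_n\to 0$.
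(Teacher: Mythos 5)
Your splitting of the $L^2(0,\infty)$ integral at $x=1$ is correct, and the deduction of the implication $d_n\to 0\Rightarrow\mathrm{RH}$ from Theorem A (adjusting $b_1$ to restore the constraint $\sum b_k/\theta_k=0$) is sound; note that this paper handles the corresponding implication for Dirichlet polynomials more directly, by applying Cauchy--Schwarz against $x^{\bar s-1}\in L^2(0,1)$ for $\Re(s)>1/2$ to get $|1-\zeta(s)A_n(s)|^2\le \varepsilon^2|s|^2/(2\Re(s)-1)$ and evaluating at a hypothetical zero, which avoids invoking the full Nyman--Beurling theorem. The problem is the converse, which is the substance of B\'aez-Duarte's theorem: what you give there is a statement of strategy whose ``technical heart'' you explicitly defer, and that deferred step is essentially the whole proof.

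Two concrete points. First, the target as you state it --- making $A_n(s)$ approximate $-1/\zeta(s)$ in the norm $dt/|s|^2$ on $\Re(s)=1/2$ --- is not attainable: $\zeta$ has (infinitely many) zeros on the critical line, near a simple zero $1/2+i\gamma$ one has $|1/\zeta(1/2+it)|\asymp |t-\gamma|^{-1}\notin L^2_{\mathrm{loc}}$, and for any Dirichlet polynomial the integrand $|1+\zeta(s)A_n(s)|^2/|s|^2$ equals $1/|\rho|^2$ at each zero $\rho$. The correct statement is that $(1+\zeta(s)A_n(s))/s\to 0$ in $L^2(\Re(s)=1/2)$, i.e.\ approximation of $-1/\zeta$ in the norm additionally weighted by $|\zeta(1/2+it)|^2$; the unavoidable contribution from neighbourhoods of the zeros is exactly what produces Burnol's lower bound $d_n^2\gtrsim C/\log n$ (Theorem C), so any argument that tries to make the integrand small pointwise must fail. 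Second, $M(x)=O(x^{1/2+\varepsilon})$ does not by itself control the weighted integral. The working scheme --- the one B\'aez-Duarte uses, and the one this paper implements in Section 3 for Dirichlet polynomials --- is a two-parameter limit: for fixed $\epsilon>0$ one shows via an effective Perron formula that the partial sums $\sum_{k\le n}\mu(k)k^{-s-\epsilon}$ converge to $1/\zeta(s+\epsilon)$ with bounds uniform in $n$ and $t$ (permitting dominated convergence in $n$), and then lets $\epsilon\to 0$, which requires dominating $|\zeta(1/2+it)/\zeta(1/2+\epsilon+it)|$ uniformly in $\epsilon$ by an $L^2(dt/|s|^2)$ majorant. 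For a Dirichlet polynomial this last domination is cheap (Hadamard product with $\sum_{\rho_P}|\rho_P|^{-2}<\infty$ and bounded real parts of zeros); for $\zeta$ it is the delicate step, and it is precisely the one your sketch does not address. As written, the proposal establishes only one of the two implications.
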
 
\noindent The latter statement reduces the RH to an extremal problem about the best approximation of $\textbf{1}_{(0,1)}$ in a 
Hilbert space in terms of elements from a finite dimensional subspace and the solution of every such a problem is given by the projection. As it is clear from 
Theorem \ref{B}, $d_n$ is nothing but the distance in $L^2(0,\infty)$ from $\textbf{1}_{(0,1)}$ to the $n$-dimensional space $\mathrm{span} \{\rho_k(x):k=1,\ldots,n\}$, where $\rho_k(x)=\{1/kx\}$, 
so that
$$
d_n^2 =  \frac{\mathrm{det}\ G(\rho_1,\ldots ,\rho_n, \textbf{1}_{(0,1)})}{\mathrm{det}\ G(\rho_1,\ldots ,\rho_n)},
$$
where $G(\rho_1,\ldots ,\rho_n, \textbf{1}_{(0,1)})$ and $G(\rho_1,\ldots ,\rho_n)$ are the Gram matrices of the corresponding functions 
with respect to the usual inner product in $L^2(0,\infty)$.

It is know that if $d_n$ converges to zero, the speed of convergence would be relatively slow. More precisely, Burnol \cite{Bur}, generalizing an earlier result of B\'aez-Duarte, Balazard, Landreau and Saias \cite{BBLS}  proved that
\begin{THEO} 
\label{BurnolB}
The sequence $d_n$ satisfies
$$
\liminf_{n\rightarrow \infty}\ d_n^2 \log n \ \geq C,
$$ 
with
 $$
 C=\sum_{\Re(\rho)=1/2} \frac{m_\rho^2}{|\rho|^2},
$$
where the sum is over the distinct nontrivial zeros $\rho$ of the Riemann zeta function and $m_\rho$ are their multiplicities. 
\end{THEO}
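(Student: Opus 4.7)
The plan is to transport the $L^2(0,\infty)$ extremal problem to an $L^2$ problem on the critical line via the Mellin transform, and then extract a lower bound from the zeros of $\zeta$ lying there. First I would record the Mellin pairs
$$\int_0^\infty \textbf{1}_{(0,1)}(x)\,x^{s-1}\,dx=\frac{1}{s},\qquad \int_0^\infty\left\{\frac{1}{kx}\right\}x^{s-1}\,dx=-\frac{k^{-s}\zeta(s)}{s}\qquad(0<\Re(s)<1),$$
the second coming from $\zeta(s)/s=1/(s-1)-\int_1^\infty\{u\}u^{-s-1}\,du$ after the substitution $u=1/(kx)$. Plancherel's theorem for the Mellin transform (an isometry from $L^2(0,\infty)$ onto $L^2$ of the critical line) then turns the infimum defining $d_n^2$ into
$$d_n^2=\frac{1}{2\pi}\inf_{P_n}\int_{-\infty}^{\infty}\frac{\big|1+P_n(1/2+it)\,\zeta(1/2+it)\big|^2}{|1/2+it|^2}\,dt,$$
where $P_n(s)=\sum_{k=1}^n b_k\,k^{-s}$ ranges over Dirichlet polynomials of length at most $n$. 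Equivalently, $\sqrt{2\pi}\,d_n$ is the $L^2(\R,dt)$-distance from $(1/2+it)^{-1}$ to the $n$-dimensional subspace $V_n$ spanned by the functions $\zeta(1/2+it)\,k^{-1/2-it}/(1/2+it)$, $k=1,\dots,n$.

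Next I would exploit the zeros $\rho=1/2+i\g$ of $\zeta$. At each such $\rho$ the numerator of the integrand above equals $1$ regardless of the choice of $P_n$, but a single pointwise value yields no useful lower bound, so I turn to Hilbert-space duality. For each zero $\rho$ of multiplicity $m=m_\rho$ I would construct a test functional $h_\rho\in V_n^\perp$ whose pairing with $(1/2+it)^{-1}$ is asymptotically $(1+o(1))\,m/\rho$. The construction rests on the Laurent expansion $1/\zeta(s)\sim c_\rho(s-\rho)^{-m}$: the condition $h_\rho\in V_n^\perp$ encodes the $m$ independent vanishings of $P_n\zeta$ and its first $m-1$ derivatives at $s=\rho$, and $h_\rho$ is taken as a Beurling--Selberg-type band-limited kernel that interacts with the $n$ frequencies $\{-\log k\}_{1\le k\le n}$ only through its jet at $\rho$.

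The $\log n$ enters through the band-limitation of the approximating family: on the critical line $P_n(1/2+it)=\sum_{k=1}^n b_k\,k^{-1/2}e^{-it\log k}$ is a generalized trigonometric polynomial with frequencies in $[-\log n,0]$. A Beurling/Ingham-type sampling inequality gives $\|h_\rho\|_{L^2(dt)}^2\asymp \log n$, and the functionals $h_\rho$ for distinct zeros become asymptotically orthogonal as $n\to\infty$. Consequently, for every fixed $T$, the orthogonal projection of $(1/2+it)^{-1}$ onto $\mathrm{span}\{h_\rho:|\g|\le T\}\subset V_n^\perp$ has squared $L^2$-norm asymptotic to $\sum_{|\g|\le T,\,\Re\rho=1/2} m_\rho^2/(|\rho|^2\log n)$, which is bounded above by $2\pi d_n^2$. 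Multiplying through by $\log n$ and sending first $n\to\infty$ and then $T\to\infty$ produces the claimed $\liminf$ bound.

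The main obstacle is the sampling/interpolation step producing $h_\rho$ with the correct asymptotic pairing against $1/s$: the pairing must scale linearly in $m_\rho$ rather than being $O(1)$, so that the contribution of each zero comes out as $m_\rho^2/|\rho|^2$ rather than $1/|\rho|^2$. This reflects the fact that the pole of $1/\zeta$ at $\rho$ has order $m_\rho$, imposing that many independent orthogonality conditions on the approximating family, which together amplify the dual norm linearly in $m_\rho$.
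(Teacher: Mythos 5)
Your reduction is sound: the Mellin pairs you record are correct, Plancherel does convert $d_n^2$ into the weighted $L^2$ problem $\inf_{P_n}\frac{1}{2\pi}\int|1+P_n\zeta|^2|1/2+it|^{-2}dt$ on the critical line, and the duality inequality $2\pi d_n^2\geq\|P_W(1/s)\|^2$ for any subspace $W\subset V_n^{\perp}$ is the right frame. This is also the skeleton of the argument the paper uses for its Dirichlet-polynomial analogue (Theorem 2): there the interpolation conditions $B(1/2+it_j)=1$ at the zeros are handled through Lubinsky's orthonormal Dirichlet polynomials $\psi_k$ and the asymptotics $K_n(u,u)\sim|1/2+iu|^2\log n$, $|K_n(u,v)|=O(1)+o(\log n)$ of the reproducing kernel, which is exactly your ``sampling inequality giving $\|h_\rho\|^2\asymp\log n$ and asymptotic orthogonality of the $h_\rho$.'' (Note the paper does not reprove Theorem C itself; it cites Burnol and proves only the DP analogue, in which multiplicities do not appear.)

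The genuine gap is precisely the step you flag as ``the main obstacle'' and then do not carry out: the construction of $h_\rho\in V_n^{\perp}$ with $\langle 1/s,h_\rho\rangle=(1+o(1))m_\rho/\rho$ and $\|h_\rho\|^2=(1+o(1))\log n$. Two things are missing. First, ``$\|h_\rho\|^2\asymp\log n$'' is not enough: to obtain the sharp constant $C$ you need the norm asymptotics with constant $1$, simultaneously with the lower bound on the pairing; an unspecified implied constant in $\asymp$ only yields $\liminf d_n^2\log n\geq cC$ for some $c>0$. Second, and more seriously, the linear-in-$m_\rho$ scaling of the pairing is not something a generic band-limited kernel provides; it is the output of a specific finite-dimensional computation. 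A zero of multiplicity $m$ imposes the $m$ conditions $B^{(j)}(\rho)=\delta_{j0}$, $0\leq j\leq m-1$, on $B=1-P_n\zeta$; the minimal norm of such a $B$ is $\mathbf{e}_1^{*}G^{-1}\mathbf{e}_1$, where $G$ is the Gram matrix of the representers of the derivative-evaluation functionals in the span of $\{k^{-s}\}_{k\leq n}$. One has $G_{jk}\sim|\rho|^{-2}(\log n)^{j+k+1}/(j+k+1)$ up to normalization, so after rescaling the problem reduces to inverting the $m\times m$ Hilbert matrix $\bigl(1/(j+k+1)\bigr)_{j,k=0}^{m-1}$, whose $(0,0)$ inverse entry equals $m^2$; that is where $m_\rho^2/(|\rho|^2\log n)$ comes from. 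Without this computation (or an equivalent explicit choice of the coefficients of $h_\rho$ in the jet basis at $\rho$), the claim that ``the $m$ orthogonality conditions amplify the dual norm linearly in $m_\rho$'' is an assertion of the theorem's conclusion rather than a proof of it. You would also need to verify that your $h_\rho$ actually lies in $V_n^{\perp}$ in $L^2(\R,dt)$ — the derivative evaluations at $\rho$ annihilate $V_n$ only as distributions, and replacing them by $L^2$ representers inside the length-$n$ Dirichlet-polynomial space is what forces the reproducing-kernel machinery in the first place.
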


The Nyman-Beurling criterion (Theorem \ref{NB}) was generalized recently for other general Dirichlet series, including Dirichlet polynomials (DP) \cite{Rot05,  DFMR13, DKDWDO}. In particular, given a  Dirichlet polynomial $P(s)=\sum_{k=1}^{m} a_k k^{-s}$ of order $m$, 
with $a_1\neq 0$, and $r\in \R$, define the function $\kappa_r(x):(0,\infty) \mapsto \C$ by 
$$
\kappa_r(x)=\sum_{k\leq x} \frac{a_k}{k^{r-1/2}}
$$
and the space $B_r$ by 
$$
\label{Ceta}
B_r := \left\{ h:(0,1) \mapsto \C\ | \ h(x)=\sum_{k=1}^{n} b_k\, \kappa_r \left(\frac{1}{\theta_kx}\right), \ b_k\in \C, \ \theta_k\geq 1 \right\}.
$$
By analogy with Theorem \ref{NB}, for every $\lambda \geq 1$, fix the subspace  $B_{\lambda,r}$ of $B_r$ to consist of those functions $f\in B_r$ that satisfy $\theta_k\leq  \lambda$ for every $k=1, \dots, n$, and  denote by $D_{\lambda,r}$ the distance in $L^2(0,1)$ from $\textbf{1}_{(0,1)}$ to $B_{\lambda,r}$. The results obtained in  \cite{Rot05,  DFMR13, DKDWDO} imply: 

\begin{THEO}
The Dirichlet polynomial $P(s)$ does not vanish for $\Re(s)>r$ if and only if
$$
\lim_{\lambda \to \infty} D_{\lambda,r}=0.
$$
\end{THEO}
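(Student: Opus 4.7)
The plan is to adapt the Nyman-Beurling approach: use the Mellin transform to recast the $L^2(0,1)$-approximation question as a problem in the Hardy space $H^2$ of the right half-plane $\Re(s)>1/2$, and then relate density to the zeros of the shifted Dirichlet polynomial $Q(s):=P(s+r-1/2)$. Since $B_{\lambda,r}$ is monotone in $\lambda$, the condition $\lim_{\lambda\to\infty}D_{\lambda,r}=0$ is equivalent to $\mathbf{1}_{(0,1)}$ belonging to the $L^2(0,1)$-closure of $B_r=\bigcup_{\lambda\geq 1}B_{\lambda,r}$. After the substitution $x=e^{-u}$, the Mellin transform $\mathcal{M}f(s)=\int_0^1 f(x)x^{s-1}\,dx$ becomes a Laplace transform, giving an isometric isomorphism $L^2(0,1)\to H^2(\{\Re(s)>1/2\})$. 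Swapping sum and integral and noting that $\kappa_r(y)=0$ for $y<1$, one computes for $\theta\geq 1$
\begin{equation*}
\mathcal{M}\!\left[\kappa_r\!\left(\tfrac{1}{\theta\,\cdot}\right)\right](s)=\frac{Q(s)}{s\,\theta^s},\qquad\mathcal{M}[\mathbf{1}_{(0,1)}](s)=\frac{1}{s}.
\end{equation*}
The hypothesis $P(s)\neq 0$ for $\Re(s)>r$ is equivalent to $Q(s)\neq 0$ for $\Re(s)>1/2$, so the question becomes whether $1/s$ lies in the $H^2$-closure of
\begin{equation*}
\mathcal{E}:=\left\{\frac{Q(s)M(s)}{s}\,:\,M(s)=\sum_k b_k\theta_k^{-s},\;b_k\in\C,\;\theta_k\geq 1\right\}.
\end{equation*}

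For the easy direction, suppose $Q$ has a zero at some $s_0$ with $\Re(s_0)>1/2$. Every element of $\mathcal{E}$ vanishes at $s_0$, and since point evaluation at an interior point is a continuous linear functional on $H^2(\{\Re(s)>1/2\})$, the closure of $\mathcal{E}$ is contained in $\{F\in H^2:F(s_0)=0\}$; as $1/s_0\neq 0$, density must fail. For the opposite direction, assume $Q(s)\neq 0$ on $\Re(s)>1/2$. The key subsidiary fact is that the family $\{M(s)/s\}$ consists exactly of the Mellin transforms of finite linear combinations $\sum b_k\mathbf{1}_{(0,1/\theta_k)}$; since the indicators of the intervals $(0,\beta)\subseteq(0,1]$ span a dense subspace of $L^2(0,1)$, the set $\{M/s\}$ is dense in $H^2(\{\Re(s)>1/2\})$. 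If $Q$ is additionally bounded below on the closed half-plane $\Re(s)\geq 1/2$, then $1/(sQ)\in H^2$; choosing $M_n/s\to 1/(sQ)$ in $H^2$ and multiplying by the bounded factor $Q$ yields $QM_n/s\to 1/s$, which is the desired approximation.

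The main obstacle is handling possible zeros of $Q$ on the critical line $\Re(s)=1/2$ (where the zero-free hypothesis on $P$ provides no control): a zero of order $k$ at $s_0=1/2+it_0$ produces a non-integrable singularity of order $|t-t_0|^{-2k}$ in $|1/(sQ)|^2$, so $1/(sQ)\notin H^2$ and cannot be approximated directly. The standard remedy is an $\varepsilon$-regularization: replace $Q(s)$ by $Q(s+\varepsilon)$, which is zero-free on $\Re(s)\geq 1/2$ for every $\varepsilon>0$, apply the argument above to obtain approximants $M_\varepsilon$, and let $\varepsilon\to 0^+$. Combining truncated Dirichlet expansions of $1/Q(s+\varepsilon)$ with growth estimates along vertical lines (in the spirit of \cite{Rot05, DFMR13, DKDWDO}) then shows that $QM_\varepsilon\to 1$ in the weighted $L^2$-norm $|s|^{-2}\,|ds|$ on $\Re(s)=1/2$, completing the proof.
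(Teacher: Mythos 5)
The paper does not actually prove this statement: it appears as Theorem D, quoted as a consequence of the results in \cite{Rot05, DFMR13, DKDWDO}. The closest in-house analogue is the proof of Theorem \ref{BP}, and your architecture coincides with it: pass to $H^2(\Re(s)>1/2)$ via the Mellin transform, use the identity $\mathcal{M}\left[\kappa_r(1/\theta\,\cdot)\right](s)=P(s+r-1/2)\theta^{-s}/s$, exploit continuity of point evaluation for the easy direction (the paper phrases this as H\"older's inequality against $x^{s-1}$), and for the converse shift $Q(s)\mapsto Q(s+\varepsilon)$ and take a double limit. Your observation that $\{M(s)/s\}$ is precisely the Mellin image of $\mathrm{span}\{\mathbf{1}_{(0,1/\theta)}:\theta\geq1\}$, hence dense in $H^2$, is a clean way to dispose of the case where $Q$ is bounded below on the closed half-plane, and your first direction is complete and correct.

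The gap is concentrated in your final sentence. Everything reduces to showing that $G_\varepsilon(s):=Q(s)/\bigl(sQ(s+\varepsilon)\bigr)$, which for each fixed $\varepsilon>0$ you have correctly placed in the closure of $\mathcal{E}$, converges to $1/s$ in $L^2(\Re(s)=1/2)$ as $\varepsilon\to0^+$. Pointwise convergence a.e.\ is clear, but to pass to the norm limit you need a dominating function, i.e.\ a bound $\left|Q(1/2+it)/Q(1/2+\varepsilon+it)\right|\leq C$ uniform in $t\in\R$ and in small $\varepsilon$. This is not a routine growth estimate along vertical lines: $Q$ may vanish on $\Re(s)=1/2$, or have zeros whose real parts accumulate at $1/2$ from the left, so numerator and denominator can be small simultaneously and the bound has to be extracted from the structure of the zero set. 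The paper obtains exactly this estimate from the Hadamard factorization of $P$, using that each factor $(\rho_P-r-it)/(\rho_P-r-\varepsilon-it)$ has modulus at most $1$ because $\Re(\rho_P)\leq r$, together with the a priori confinement $\alpha\leq\Re(\rho_P)\leq\beta$ of the zeros of a Dirichlet polynomial, which makes $\sum_{\rho_P}|\Re(1/\rho_P)|$ finite and controls the exponential factors. Without this (or an equivalent) uniform bound the interchange of the limits $n\to\infty$ and $\varepsilon\to0$ is not justified; that bound is precisely what your appeal to arguments ``in the spirit of'' the references is standing in for, and supplying it would complete the proof.
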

Observe that the latter is the natural analog of the Nyman-Beurling criterion for DP. However, to the best of our knowledge, there are no results in the literature 
which may be considered analogs of Theorems B and C about DP. The main aim of this paper is to fill this gap. Our main results read as follows:

\begin{thm}\label{BP}
\label{BP} The Dirichlet polynomial  $P(s)=\sum_{k=1}^{m} a_k k^{-s}$, with $a_1\neq 0$,  does not vanish in the semi-plane  $\Re(s)>r$ if and only if $\lim_{n \to \infty}d_{n,r}=0$, where
$$
d_{n,r}^2=\inf_{{b_1,\ldots,b_n\in \C}\atop}\int_0^\infty \left|\textbf{1}_{(0,1)}-\sum_{k=1}^n b_k \kappa_r\left(\frac{1}{kx}\right)\right|^2 dx.
$$
\end{thm}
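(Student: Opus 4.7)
The strategy is to reduce the $L^2(0,\infty)$-approximation problem to a weighted $L^2$-problem on the critical line $\Re(s)=1/2$ via Mellin-Plancherel. Using the explicit decomposition $\kappa_r(1/(kx))=\sum_{j=1}^m (a_j/j^{r-1/2})\,\mathbf{1}_{(0,1/(kj)]}(x)$, a direct computation shows that $\kappa_r(1/(kx))$ is supported in $(0,1/k]$ and has Mellin transform $k^{-s}P(s+r-\tfrac12)/s$ on $s=\tfrac12+it$, while $\mathbf{1}_{(0,1)}$ has Mellin transform $1/s$. The Mellin-Plancherel identity then yields
\begin{equation*}
d_{n,r}^{\,2}\;=\;\frac{1}{2\pi}\,\inf_{D_n}\int_{-\infty}^{\infty}\frac{\bigl|1-P(r+it)\,D_n(\tfrac12+it)\bigr|^2}{|\tfrac12+it|^2}\,dt,
\end{equation*}
the infimum being taken over Dirichlet polynomials $D_n(s)=\sum_{k=1}^{n}b_k k^{-s}$.

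For the necessity of zero-freeness ($d_{n,r}\to 0$ forces $P$ to have no zero in $\Re(s)>r$), I would argue by contradiction. Suppose $P(s_0)=0$ with $\Re(s_0)>r$, and set $s_0':=s_0-r+\tfrac12$, so that $\Re(s_0')>\tfrac12$. For each $D_n$ the entire function $g_n(s):=1-P(s+r-\tfrac12)D_n(s)$ is bounded on the closed half-plane $\Re(s)\geq\tfrac12$ (both $P$ and $D_n$ being finite Dirichlet polynomials), so $g_n/s$ belongs to the Hardy space $H^2(\{\Re s>\tfrac12\})$ with $\|g_n/s\|_{H^2}^2=2\pi\,d_{n,r}^{\,2}$. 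The $H^2$ reproducing-kernel bound at $s_0'$ gives $|g_n(s_0')|\leq C(s_0')\,d_{n,r}$, where $C(s_0')=|s_0'|/\sqrt{2\Re(s_0')-1}$; but $g_n(s_0')=1-P(s_0)D_n(s_0')=1$ for every $D_n$, yielding the contradiction $d_{n,r}\geq 1/C(s_0')>0$.

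For the sufficiency, assume $P$ is zero-free in $\Re(s)>r$. I would invoke the Dirichlet-polynomial Nyman-Beurling criterion (Theorem~D of the introduction): under this hypothesis $\mathbf{1}_{(0,1)}$ is approximable in $L^2(0,1)$ by finite linear combinations of $\kappa_r(1/(\theta_k x))$ with real parameters $\theta_k\geq 1$, and since each summand is supported in $(0,1]$ the approximation persists in $L^2(0,\infty)$. It therefore suffices to show that every $\kappa_r(1/(\theta x))$ with real $\theta\geq 1$ lies in the $L^2(0,\infty)$-closed span of $\{\kappa_r(1/(kx)):k\in\mathbb{N}\}$; via the Mellin reduction above, this amounts to the density assertion that $\{k^{-s}:k\in\mathbb{N}\}$ spans a closed subspace containing every $\theta^{-s}$ (real $\theta\geq 1$) in the weighted space $L^2(\mathbb{R},|P(r+it)/(\tfrac12+it)|^2\,dt)$.

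This integer-density step is the main obstacle, being the natural analogue of B\'aez-Duarte's refinement in the classical case $P=\zeta$. I expect it to follow either from a Wiener-type completeness theorem for the exponential system $\{e^{-it\log k}\}_{k\geq 1}$, exploiting the polynomial decay of the weight at infinity, or from a direct construction based on the Dirichlet-series expansion $1/P(s+r-\tfrac12)=\sum_n c_n n^{-s}$, which is absolutely convergent in some right half-plane because $a_1\neq 0$, combined with a suitable regularized truncation to control the tail on the critical line.
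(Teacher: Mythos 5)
The first half of your argument (that $d_{n,r}\to 0$ forces $P$ to be zero-free in $\Re(s)>r$) is correct and is essentially the paper's proof: the paper tests $1-h$ against $x^{\bar s-1}$ for $\Re(s)>1/2$ and applies H\"older, which is exactly your $H^2$ reproducing-kernel evaluation at $s_0'$ with the same constant $|s_0'|/\sqrt{2\Re(s_0')-1}$. Your Mellin--Plancherel reformulation of $d_{n,r}^2$ is also the one the paper uses.

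The converse direction, however, has a genuine gap: the ``integer-density step'' you defer is the entire content of the theorem, and you do not prove it. Routing through the real-parameter Nyman--Beurling criterion (your Theorem D) does not help, because showing that each $\kappa_r(1/(\theta x))$, $\theta\geq 1$ real, lies in the closed span of the integer-indexed family is at least as hard as the original statement --- in the case $P=\zeta$ this passage from real to integer dilations is precisely B\'aez-Duarte's theorem, and no soft Wiener-type completeness argument is known to deliver it. The paper does not use Theorem D at all. Instead it directly constructs approximants, which is the second route you gesture at: it sets $H_{n,\epsilon}(s)=\frac{P(s+r-1/2)}{\sqrt{2\pi}\,s}\sum_{k=1}^n \mu_P(k)\,k^{-(s+r+\epsilon-1/2)}$, an $\epsilon$-shifted truncation of $1/P$, and shows these converge in $L^2(\Re(s)=1/2)$ to $\frac{1}{\sqrt{2\pi}\,s}\cdot\frac{P(s+r-1/2)}{P(s+r+\epsilon-1/2)}$ and then, as $\epsilon\to 0$, to $\mathcal{M}[\mathbf{1}_{(0,1)}]=1/(\sqrt{2\pi}\,s)$. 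Carrying this out requires several nontrivial inputs absent from your proposal: a uniform two-sided bound $|P(s)|^{\pm 1}\leq C_\epsilon$ in the shifted zero-free half-plane (proved via Borel--Carath\'eodory plus Phragm\'en--Lindel\"of); an effective Perron-formula argument giving $\sum_{k\leq n}\mu_P(k)k^{-s}=1/P(s)+O_{\epsilon,\delta}(n^{-\delta})$ uniformly, hence uniform boundedness in $n$ so that dominated convergence applies; and, for the final limit $\epsilon\to 0$, a uniform bound on $|P(r+it)/P(r+\epsilon+it)|$ obtained from the Hadamard factorization of $P$ together with the fact that the real parts of the zeros of a Dirichlet polynomial lie in a bounded strip. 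Until you supply these estimates (or an equivalent), the sufficiency direction is not established.
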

\noindent
and

\begin{thm} \label{BE}
For the Dirichlet polynomial  $P(s)=\sum_{k=1}^{m} a_k k^{-s}$, $a_1\neq 0$, the associated sequence $d_{n,r}$ satisfies
$$
\liminf_{n\rightarrow \infty}\ d_{n,r}^2 \log n \geq  C,
$$
with 
$$
C=\sum_{\Re(\rho_P)=r}  \frac{1}{|\rho_P-r+1/2|^2},
$$ 
where the sum is over the zeros $\rho_P$ of $P(s)$ which belong to the line $\Re(s)=r$. 
If $P$ does not possess zeros on $\Re(s)=r$ the constant $C$ is defined to be zero. 
\end{thm}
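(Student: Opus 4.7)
The proof follows Burnol's strategy for Theorem \ref{BurnolB}, reducing to a Plancherel integral on the critical line and extracting the contribution of each zero on $\Re(s)=r$. Using the Mellin transforms $\widetilde{\mathbf{1}_{(0,1)}}(s)=1/s$ and $\widetilde{\kappa_r(1/(k\,\cdot))}(s)=P(s+r-1/2)/(s\,k^s)$ (which are also the basis of Theorem \ref{BP}), Plancherel gives
\[
d_{n,r}^2=\frac{1}{2\pi}\inf_{D_n}\int_{-\infty}^\infty\frac{|1-P(r+it)\,D_n(1/2+it)|^2}{1/4+t^2}\,dt,
\]
with $D_n(s)=\sum_{k=1}^n b_k k^{-s}$ running over Dirichlet polynomials of length $n$. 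The crucial observation is that for every zero $\rho_P=r+i\gamma$ of $P$, the integrand equals $1/|\rho_P-r+1/2|^2$ at $t=\gamma$ independently of the choice of $D_n$, so each such zero forces a pointwise lower bound on the integrand at one point.

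To convert this pointwise rigidity into an $L^2$ lower bound of order $(\log n)^{-1}$, I would use the Hilbert-space duality
\[
d_{n,r}^2=\sup_{0\neq\psi\in V_n^\perp}\frac{|\langle\mathbf{1}_{(0,1)},\psi\rangle|^2}{\|\psi\|^2},
\]
where $V_n=\mathrm{span}\{\kappa_r(1/(k\,\cdot)):k=1,\dots,n\}\subset L^2(0,\infty)$. For each zero $\rho_j=r+i\gamma_j$ on $\Re(s)=r$ one constructs an explicit $\psi_j\in V_n^\perp$ whose Mellin transform on the critical line is localised near $t=\gamma_j$ in an interval of length $\asymp 1/\log n$. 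The orthogonality conditions against $\kappa_r(1/(k\,\cdot))$, $k\leq n$, translate via Plancherel into a finite system of Fourier-type vanishing conditions at the frequencies $\log 1,\dots,\log n$; the factor $P(r+it)$ appearing in these constraints vanishes at $t=\gamma_j$, which precisely permits a bump of the prescribed width centred at $\gamma_j$ to satisfy them up to $o(1)$ error. A direct computation then yields
\[
\frac{|\langle\mathbf{1}_{(0,1)},\psi_j\rangle|^2}{\|\psi_j\|^2}=\frac{1+o(1)}{\log n}\cdot\frac{1}{|\rho_j-r+1/2|^2}.
\]

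Because the $\psi_j$ concentrate at distinct real frequencies, they are asymptotically orthogonal in $L^2(0,\infty)$ as $n\to\infty$, and a suitably weighted linear combination $\psi=\sum_j\alpha_j\psi_j$ gives
\[
\frac{|\langle\mathbf{1}_{(0,1)},\psi\rangle|^2}{\|\psi\|^2}\geq\frac{1+o(1)}{\log n}\sum_{\rho_P}\frac{1}{|\rho_P-r+1/2|^2},
\]
which via the dual formula yields $\liminf_{n\to\infty}d_{n,r}^2\log n\geq C$. A standard truncation argument reduces the case of infinitely many zeros on $\Re(s)=r$ to the finite case and recovers the full sum in the limit.

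The main obstacle is the explicit construction of the $\psi_j$ with sharp asymptotics for the ratio $|\langle\mathbf{1}_{(0,1)},\psi_j\rangle|^2/\|\psi_j\|^2$, and (when several zeros are present) for the cross-inner products $\langle\psi_i,\psi_j\rangle$ with $\rho_i\neq\rho_j$. This parallels the delicate step in Burnol's argument for $\zeta$; here the role of $\zeta$ is played by the Dirichlet polynomial $P$, and the Lubinsky Dirichlet orthogonal polynomials listed in the keywords appear to provide the natural construction, since they are orthogonal with respect to exactly the inner product that defines $V_n$.
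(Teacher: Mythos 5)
Your reduction via the Mellin--Plancherel identity to
\[
d_{n,r}^2=\inf_{D_n}\frac{1}{2\pi}\int_{\Re(s)=1/2}\frac{|1-P(s+r-1/2)D_n(s)|^2}{|s|^2}\,|ds|
\]
is exactly the paper's starting point, and you have correctly identified both the role of the zeros on $\Re(s)=r$ (they pin the numerator to $1$) and the relevance of Lubinsky's orthogonal Dirichlet polynomials. However, the proposal stops short of a proof at precisely the point where all the quantitative content lies: the construction of the functions $\psi_j\in V_n^\perp$ localised at scale $1/\log n$ near each $\gamma_j$, with the sharp constant $|\rho_j-r+1/2|^{-2}$ and with controlled cross-inner products, is asserted rather than carried out, and you say so yourself. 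There is also a structural problem with the dual route as you describe it: the formula $d_{n,r}^2=\sup_{\psi\in V_n^\perp}|\langle\mathbf{1}_{(0,1)},\psi\rangle|^2/\|\psi\|^2$ requires $\psi$ to lie \emph{exactly} in $V_n^\perp$, whereas your bumps are only claimed to satisfy the $n$ orthogonality constraints ``up to $o(1)$ error.'' Making that rigorous means projecting the bumps onto $V_n^\perp$ and showing the projection costs only $o(1/\log n)$, which in turn requires essentially the same kernel asymptotics that constitute the whole difficulty.

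The paper closes this gap on the primal side, which turns out to be cleaner. Since $1-P(s+r-1/2)A_n(s)$ is itself a Dirichlet polynomial of length $nm$ taking the value $1$ at each $1/2+it_j$, one relaxes to
\[
d_{n,r}^2\ \geq\ \inf\Bigl\{\tfrac{1}{2\pi}\int_{\Re(s)=1/2}|B(s)/s|^2\,|ds| \,:\, B\in\mathcal{D}_{nm},\ B(1/2+it_j)=1,\ j=1,\dots,l\Bigr\}.
\]
Expanding $B$ in Lubinsky's orthonormal system $\psi_k(t)=k^{1/2-it}-(k-1)^{1/2-it}$ converts the weighted $L^2$ norm into the Euclidean norm of the coefficient vector, so the relaxed problem is a minimum-norm problem under the linear constraints $A\mathbf{B}=\mathbf{1}_l$; its exact value is $\mathbf{1}_l^{*}H^{-1}\mathbf{1}_l$ with $H=\bigl(K_{nm}(t_i,t_j)\bigr)_{i,j=1}^{l}$ the reproducing-kernel matrix. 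Lubinsky's estimates $K_n(u,u)=|1/2+iu|^2\log n\,(1+o(1))$ and $K_n(u,v)=O(1)+o(\log n)$ for $u\neq v$ then give $\mathbf{1}_l^{*}H^{-1}\mathbf{1}_l\sim(\log n)^{-1}\sum_{j=1}^{l}(1/4+t_j^2)^{-1}$ by a direct cofactor computation, and letting $l$ exhaust the zeros yields the constant $C$. In short: your strategy is a plausible dual of the actual argument, but the decisive analytic step is missing, and the paper's exact solution of the constrained least-norm problem is what replaces your unconstructed bump functions.
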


It is worth mentioning that de Roton \cite{Rot05, Rot06} obtained nice generalizations of Theorems B and C  for zeros of $L$-functions in the Selberg class. 
The methods employed in \cite{Rot05, Rot06} require deep analysis and a crucial fact which allows the generalizations is that those functions obey a functional equation which is a natural analog of the 
corresponding one for the Riemann zeta function itself. However, in general the Dirichlet polynomials do not obey such structural properties, so that the proofs of our main results require 
different techniques. Another feature of Theorems 1 is that our criterion concerns lack of zeros in the semi-planes $\Re(s)>r$, for every $r\in \R$, while both Baez-Duarte's criterion and 
de Roton's generalization deal exclusively with the semi-plane $\Re(s)>1/2$. From this point of view Theorem 1 in the present paper is more similar to the result obtained in \cite{DKDWDO} where 
Baez-Duarte's result was extended for a class of $L$-functions which is a bit narrower than in de Roton's result but for zeros in the semi-plane of the form $\Re(s)>1/p$ for any $p\in (1,2]$.

\section{Preliminary results}
\setcounter{equation}{0}

In this section we furnish the necessary definitions and known results for the proofs of the main theorems.
\subsection{Definitons and basic properties of DP}

Let  $P(s)=\sum_{k=1}^{m} a_k k^{-s}$ be a DP of order $m$ with $a_1\neq 0$. Recall that we associate with $P$ the function  
$\kappa_r(x):\R_+ \mapsto \C$, where $R_+=(0,\infty)$, defined by  
\begin{equation}\label{capa}
\kappa_r(x)=\sum_{k\leq x} \frac{a_k}{k^{r-1/2}}
\end{equation}
and the space 
\begin{equation}
\label{Ceta}
C_r := \left\{ h:(0,1) \mapsto \C\, | \, h(x)=\sum_{k=1}^{n} b_k\, \kappa_r \left(\frac{1}{kx}\right), \ b_k\in \C \right\}.
\end{equation}

The inequality  
$$
|P(s)-a_1|\leq \sum_{k=2}^m \frac{|a_k|}{k^{\Re(s)}}
$$
shows that $P(s)\to a_1$ uniformly with respect to  $\Im(t)$ when $\Re(s)\to \infty$.  On the other hand, $P(s)m^s\to a_m$ also uniformly with respect to  $\Im(t)$ when $\Re(s)\to -\infty$. 
Therefore, since $a_1$ and $a_m$ are nonzero, there are real constants $\alpha$ and $\beta$ associated with $P$ such that 
\begin{equation}\label{00}
\alpha \leq \Re(\rho_P)\leq \beta
\end{equation}
for every zero  $\rho_P$ of $P$. 

Since $P(s)$ is a finite sum of exponential functions, it is an entire function of order one and finite type. Then, by the Hadamard factorization theorem, there are complex constants $a, b$ and a natural number $q$ such that 
$$
P(s)=s^q e^{a+bs} \prod_{\rho_P}\left( 1-\frac{s}{\rho_P}\right)e^{s/\rho_P},
$$
where $\rho_P$ runs over the zeros of $P$ arranged in increasing order of $|\rho_P|$ and $\sum_{\rho_P}|\rho_P|^{-2}<\infty$. Moreover the infinite product converges uniformly on the compacts sets of $\C$.

Given a Dirichlet series $f(s)=\sum_{k=1}^{\infty}a_kk^{-s}$, with $a_1\neq 0$, we denote by $\sigma_a=\sigma_a(f)$ its abscissa of absolute convergence and we set  
$\sum_{k=1}^{\infty}\mu_f(k)k^{-s}$ to be the formal Dirichlet series associated with $1/f(s)$. 

\subsection{The Mellin Transform}
The Mellin transform of a function $f:\R_+\mapsto \C$ is the complex-valued function 
\begin{equation}\label{MT}
\mathcal{M}\left[f(x);s\right]=\mathcal{M}f(s)=\frac{1}{\sqrt{2\pi}}\int_0^\infty f(x)x^{s-1}dx,
\end{equation}
defined for those complex numbers $s$ for which the integral converges and the Fourier transform $\mathcal{F}$ of a function  $f\in L^1(\R)$ is defined by 
$$
\mathcal{F}\left[f(x);t\right]=\mathcal{F}f(t)=\frac{1}{\sqrt{2\pi}}\int_{-\infty}^\infty f(x)e^{-i x t}dx.
$$
Note that the terms Mellin transform and Fourier transform are also applied to the mapping which takes $f$ to $\mathcal{M}f$ and $f$ to $\mathcal{F}f$. As it is known,
the Plancherel theorem allows the Fourier transform to be extended to an isometry $\mathcal{F}:  L^2 (\R) \mapsto L^2(\R)$, called the extended Fourier transform or the Fourier-Plancherel transform. Consider the isometry $\mathcal{I}:L^2(\R_+) \mapsto  L^2(\R)$ which associates, with each $f \in L^2(\R_+)$, the function $g(u)=f(e^{-u})e^{-u/2}$ in $L^2 (\R)$. Then the extended Fourier transform allows us to define the extended Mellin transform $\mathcal{M}:  L^2(\R_+) \mapsto  L^2(\Re(s)=1/2)$ by
$$
\mathcal{M}\left[f(x);s\right]=\mathcal{F}\left[ \mathcal{I}f(x);t\right], \ \ \ s=1/2+it.
$$
When $\mathcal{I}f \in L^1(\mathbb{R})$, for some $f\in L^2(\R_+)$, the Mellin transform $\mathcal{M}\left[f(x);1/2+it\right]$ is given by (\ref{MT}).

In the proof of  Theorem \ref{BP} we shall need the Mellin transform of the functions in $C_r$. It follows from Abel's identity that 
$$
\sum_{k\leq n} \frac{a_k}{k^{r-1/2}} \frac{1}{k^s}=\frac{\kappa_r(n)}{n^s} +s\int_1^n \frac{\kappa_r(y)}{y^{s+1}}dy.
$$
Since $\kappa_r$ is bounded in $\R_+$, if $\Re(s)>0$, we let $n \to \infty$ in the latter to obtain
$$
\frac{P(s+r-1/2)}{s}= \int_1^\infty \frac{\kappa_r(y)}{y^{s+1}}dy.
$$
If $k \in \N$ the change of variables $y=1/(kx)$ yields 
$$
\frac{P(s+r-1/2)}{s}k^{-s}= \int_0^{1/k} \kappa_r(1/kx)\, x^{s-1}\, dx=  \int_0^{1} \kappa_r(1/kx)\, x^{s-1}\, dx.
$$
Therefore, for every $h(x)=\sum_{k=1}^{n} b_k\, \kappa_r (1/kx)$ in $C_r$, we obtain
\begin{equation}
\label{Int1f}
\int_0^1 h(x)x^{s-1}dx=\frac{P(s+r-1/2)}{s}\sum_{k=1}^{n} \frac{b_k}{k^{s}}, \ \  \Re(s)>0.
\end{equation}

\subsection{Some technical results} 
 We shall need the first effective Perron formula  (see \cite[Chapter II.2, p. 132]{Ten}):
 \begin{LEM}
\label{LemTit}
 Let $F(w)=\sum a_k k^{-w}$ be a Dirichlet series. Then, for every $c> \max \{0,\sigma_a\}$, $T\geq 1$, and $x\geq1$ not an integer, we have
 $$
\sum_{k<x} a_k  =  \frac{1}{2 \pi i} \int_{c-iT}^{c+iT}F(w)\frac{x^w}{w}dw+O\left( x^c \sum_{k=1}^\infty \frac{|a_k|}{k^c (1+T|\log(x/k)|)}\right).
$$
\end{LEM}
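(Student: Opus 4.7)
The plan is to derive the formula from the classical effective Perron--Mellin discontinuous integral, namely
\begin{equation}
\frac{1}{2\pi i}\int_{c-iT}^{c+iT}\frac{y^{w}}{w}\,dw=\mathbf{1}_{\{y>1\}}+O\!\left(\frac{y^{c}}{1+T|\log y|}\right)\qquad(y>0,\ y\neq 1),\label{eq:disc}
\end{equation}
uniformly for $c>0$ and $T\geq 1$. Once \eqref{eq:disc} is in hand the lemma follows by writing $F(w)x^{w}/w=\sum_{k}a_{k}(x/k)^{w}/w$, interchanging sum and integral by absolute convergence, and applying \eqref{eq:disc} with $y=x/k$.

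To establish \eqref{eq:disc} I would split into two regimes. When $T|\log y|\geq 1$, close the vertical segment to the left if $y>1$ (picking up the residue $1$ of $1/w$ at $w=0$) and to the right if $0<y<1$ (no residue encountered), and let the opposite vertical side tend to infinity. The horizontal pieces at height $\pm T$ are controlled by $\int_{-\infty}^{c}y^{\sigma}T^{-1}\,d\sigma=y^{c}/(T\log y)$ in the case $y>1$, with the analogous bound for $0<y<1$; this yields the required error $O(y^{c}/(T|\log y|))$, absorbed by $O(y^{c}/(1+T|\log y|))$ in the regime under consideration. When $T|\log y|<1$, so $y$ is within a factor $e^{1/T}$ of $1$, a finer direct estimate of the original integral on $\Re(w)=c$---combining $|y^{w}|=y^{c}$ with the near-cancellation of the oscillatory factor $y^{it}=e^{it\log y}$ split at $|t|=1/|\log y|$---gives the uniform bound $O(y^{c})$, which in this regime is $O(y^{c}/(1+T|\log y|))$ since the denominator is bounded.

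Turning to the main argument: since $c>\sigma_{a}$, one has $\sum_{k}|a_{k}|k^{-c}<\infty$, so the series defining $F(w)$ converges absolutely on $\Re(w)=c$ uniformly in $|\Im(w)|\leq T$, justifying the interchange
$$
\frac{1}{2\pi i}\int_{c-iT}^{c+iT}F(w)\frac{x^{w}}{w}\,dw=\sum_{k=1}^{\infty}a_{k}\cdot\frac{1}{2\pi i}\int_{c-iT}^{c+iT}\frac{(x/k)^{w}}{w}\,dw.
$$
Applying \eqref{eq:disc} with $y=x/k$ (which is never $1$ because $x\notin\N$), the main term is exactly $\sum_{k<x}a_{k}$, and the cumulative error is
$$
\sum_{k=1}^{\infty}|a_{k}|\cdot O\!\left(\frac{(x/k)^{c}}{1+T|\log(x/k)|}\right)=O\!\left(x^{c}\sum_{k=1}^{\infty}\frac{|a_{k}|}{k^{c}(1+T|\log(x/k)|)}\right),
$$
which is the stated $O$-term. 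The chief technical difficulty lies in obtaining \eqref{eq:disc} with the uniform denominator $1+T|\log y|$ rather than the weaker $T|\log y|$: the contour-shift estimate degenerates as $|\log y|\to 0$, so the regime $T|\log y|<1$ must be handled by the separate direct estimate sketched above. All remaining steps reduce to standard absolute-convergence arguments.
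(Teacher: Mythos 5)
The paper offers no proof of this lemma at all: it is quoted directly from Tenenbaum \cite{Ten}, so there is no internal argument to compare against, and your proposal is essentially the standard proof of that cited result (reduce to the truncated discontinuous integral for $y^{w}/w$, then sum over $k$ with $y=x/k$; the hypothesis $c>\sigma_a$ justifies the interchange and $x\notin\mathbb{N}$ keeps $y$ away from $1$). The reduction and the regime $T|\log y|\ge 1$ (contour shift, horizontal segments bounded by $y^{c}/(\pi T|\log y|)$) are correct. The one point where your sketch does not quite work as written is the regime $T|\log y|<1$: there $T<1/|\log y|$, so ``splitting at $|t|=1/|\log y|$'' is vacuous --- the factor $e^{it\log y}$ is essentially constant on the whole range $|t|\le T$ --- and the trivial bound $y^{c}\int_{-T}^{T}|c+it|^{-1}\,dt$ costs a factor of order $\log(T/c)$ rather than $O(1)$. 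To obtain the clean $O(y^{c})$ one should either replace the segment by the circular arc $|w|=\sqrt{c^{2}+T^{2}}$ through the left (resp.\ right) half-plane, on which $|y^{w}|\le y^{c}$ and the factor $|w|^{-1}$ exactly offsets the arc length, or pair $t$ with $-t$ and use $\frac{1}{c+it}+\frac{1}{c-it}=\frac{2c}{c^{2}+t^{2}}$ together with $|\sin(t\log y)|\le |t|\,|\log y|$. With that local repair the argument is complete and coincides with the classical proof in the reference the paper cites.
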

\noindent The following is Lemma 6.30 in \cite{BatDia}:

\begin{LEM}
\label{LemBat}
 Let $f(s)=\sum a_k k^{-s}$ be a Dirichlet series with $a_1\neq 0$. The Dirichlet series $f(s)$ has finite abscissa of convergence if and only if $f^{-1}(s)=\sum \mu_f(k) k^{-s}$ has finite abscissa of convergence. 
 \end{LEM}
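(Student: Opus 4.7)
By symmetry — $(f^{-1})^{-1}=f$ as formal Dirichlet series and the leading coefficient $\mu_f(1)=1/a_1$ of $f^{-1}$ is again nonzero — it suffices to prove one implication: if $f$ has finite abscissa of convergence, then so does $f^{-1}=\sum \mu_f(k)k^{-s}$; the converse then follows upon applying the same argument with the roles of $f$ and $f^{-1}$ exchanged.

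My plan for the forward implication is to reduce $1/f$ to an absolutely convergent geometric series in a sufficiently far right half-plane. After normalizing $a_1=1$ (which only rescales $\mu_f$ by a constant), the classical inequality $\sigma_a(f)\leq \sigma_c(f)+1$ produces a half-plane $\Re(s)>\sigma_a(f)$ of absolute convergence, on which the tail
$$
L(\sigma):=\sum_{k\geq 2}|a_k|k^{-\sigma}
$$
is finite. Monotone convergence gives $L(\sigma)\to 0$ as $\sigma\to\infty$, so I can pick $\sigma_0$ large enough to ensure $L(\sigma_0)<1$. On the half-plane $\Re(s)\geq\sigma_0$ the function $h(s):=f(s)-1=\sum_{k\geq 2}a_kk^{-s}$ satisfies $|h(s)|\leq L(\sigma_0)<1$, which legitimizes the absolutely convergent expansion
$$
\frac{1}{f(s)}=\sum_{j\geq 0}(-1)^j h(s)^j.
$$

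Each power $h(s)^j=\sum_k c_j(k)k^{-s}$ is itself an absolutely convergent Dirichlet series, and the standard $j$-fold convolution bound gives
$$
\sum_{k\geq 1}|c_j(k)|k^{-\sigma}\leq \Big(\sum_{m\geq 2}|a_m|m^{-\sigma}\Big)^j=L(\sigma)^j.
$$
Summing in $j$ bounds the double series by $\sum_{j\geq 0}L(\sigma_0)^j=1/(1-L(\sigma_0))<\infty$, which is exactly the hypothesis needed to apply Fubini and interchange the two summations. After the interchange, the coefficient of $k^{-s}$ in the rearranged series must equal $\mu_f(k)$ by uniqueness of the formal Dirichlet inverse (both expansions realize the identity $1/f=\sum_j(-h)^j$ at the coefficient level). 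The conclusion
$$
\sum_{k\geq 1}|\mu_f(k)|k^{-\sigma}\leq \frac{1}{1-L(\sigma_0)}<\infty,\qquad \sigma\geq\sigma_0,
$$
gives $\sigma_c(f^{-1})\leq\sigma_a(f^{-1})\leq\sigma_0<\infty$, as desired. The only step that requires genuine care is the Fubini interchange together with the identification of the emerging coefficients as the formally-defined $\mu_f(k)$; apart from that the proof is routine geometric-series estimation, so I do not anticipate any serious obstacle.
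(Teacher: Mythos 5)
Your proof is correct. Note, however, that the paper does not prove this statement at all: it is quoted verbatim as Lemma~6.30 of Bateman--Diamond \cite{BatDia}, so there is no in-paper argument to compare against. Your argument is the standard self-contained one: normalize $a_1=1$, pass from finite abscissa of convergence to a half-plane of absolute convergence via $\sigma_a\le\sigma_c+1$, push $\sigma_0$ far enough right that the tail $L(\sigma_0)=\sum_{k\ge2}|a_k|k^{-\sigma_0}<1$, and expand $1/f=\sum_j(-1)^j h^j$ as an absolutely convergent double series dominated by $\sum_j L(\sigma_0)^j$. The two points that genuinely need checking are both handled: the Tonelli bound $\sum_k|c_j(k)|k^{-\sigma}\le L(\sigma)^j$ justifies the interchange of summations, and the identification of the resulting coefficients with $\mu_f(k)$ follows because $h$ has no $k=1$ term, so $h^j$ is supported on $k\ge 2^j$, the series $\sum_j(-h)^j$ is summable as a formal Dirichlet series, and the formal inverse is unique. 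The symmetry reduction via $(f^{-1})^{-1}=f$ and $\mu_f(1)=1/a_1\neq0$ is also sound. This actually yields slightly more than the stated lemma (a finite abscissa of \emph{absolute} convergence for $f^{-1}$), which is all the paper uses downstream in Lemma~\ref{LemLan}.
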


Next we prove two technical lemmas. 
\begin{lem}
\label{LemLit}
 Let $f(s)=\sum a_k k^{-s}$ be a Dirichlet series with $a_1\neq 0$ and abscissa of absolute convergente $\sigma_a$. If $f(s)$ does not vanish in $\Re(s)>\sigma_z\geq \sigma_a$ then for each $\epsilon>0$
 $$
 |f(s)|^{\pm1}\leq C_\epsilon,
 $$
 uniformly in $\Re(s)\geq \sigma_z+\epsilon$.
\end{lem}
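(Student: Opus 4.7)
The upper bound is a direct consequence of absolute convergence: for $\Re(s)\geq \sigma_z+\epsilon>\sigma_a$ one has
$$
|f(s)|\leq \sum_{k=1}^\infty \frac{|a_k|}{k^{\sigma_z+\epsilon}} < \infty,
$$
a single finite constant depending only on $\epsilon$, so this half of the statement is routine and I would dispatch it in a single line.

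For the lower bound $|f(s)|\geq c_\epsilon>0$ I would split the closed half-plane $\Re(s)\geq \sigma_z+\epsilon$ into two pieces. Since $|f(s)-a_1|\leq \sum_{k\geq 2}|a_k|/k^{\Re(s)}$ tends to $0$ as $\Re(s)\to\infty$ uniformly in $\Im(s)$, I can choose $\sigma_1\geq \sigma_z+\epsilon$ large enough that $|f(s)|\geq |a_1|/2$ whenever $\Re(s)\geq \sigma_1$. The remaining task is to show that $\inf_{S}|f|>0$ on the strip $S=\{\sigma_z+\epsilon\leq \Re(s)\leq \sigma_1\}$, where $f$ is nonvanishing by hypothesis.

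I would dispatch the strip by Bohr's theorem on uniform almost-periodicity: every Dirichlet series absolutely convergent in $\Re(s)>\sigma_a$ is uniformly almost periodic on each closed subhalfplane $\Re(s)\geq\sigma_a+\delta$, so in particular the family of imaginary translates $\{f(\cdot+it)\}_{t\in\R}$ is normal in the topology of uniform convergence on $\Re(s)\geq \sigma_z+\epsilon$. Suppose, for contradiction, that $\inf_S|f|=0$, and pick $s_n=\sigma_n+it_n\in S$ with $f(s_n)\to 0$. Passing to a subsequence, $\sigma_n\to\sigma^\ast\in[\sigma_z+\epsilon,\sigma_1]$. If $\{t_n\}$ is bounded, a further subsequence converges to some $s^\ast\in S$ with $f(s^\ast)=0$, contradicting nonvanishing. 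Otherwise $|t_n|\to\infty$, and by normality I extract a subsequence such that $f(\cdot+it_n)\to g$ uniformly on $\Re(s)\geq \sigma_z+\epsilon$. Then $g(\sigma^\ast)=0$ by the continuity argument $|f(\sigma^\ast+it_n)-f(s_n)|\to 0$; since each translate $f(\cdot+it_n)$ is nonvanishing on the open half-plane $\Re(s)>\sigma_z$, Hurwitz's theorem forces $g\equiv 0$; but then $f(\sigma+it_n)\to 0$ for each fixed $\sigma\geq\sigma_1$, contradicting the lower bound $|f(\sigma+it_n)|\geq |a_1|/2$ obtained in the first step.

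The main obstacle is precisely this strip step: the elementary trick of "large $\Re(s)$'' gives boundedness below only in a tail, and the finite imaginary direction is easy by compactness, but controlling the vertical behaviour on a bounded range of $\Re(s)$ is what forces the use of a non-trivial tool. Bohr's uniform almost-periodicity theorem for ordinary Dirichlet series on closed subhalfplanes of absolute convergence, coupled with Hurwitz's theorem to prevent the limit translate from collapsing to zero, is what I expect to be the decisive ingredient; the rest is packaging.
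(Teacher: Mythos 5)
Your proof is correct in substance, but it follows a genuinely different route from the paper's. The paper argues quantitatively: since $f$ is zero-free in $\Re(s)>\sigma_z$, it takes a branch of $\log f$ there, notes that $\Re\log f=\log|f|$ is bounded above on $\Re(s)\geq\sigma_z+\epsilon/2$ by absolute convergence, and applies the Borel--Carath\'eodory theorem on concentric circles centred at $b+it$ (using that $|\log f(b+it)|$ is bounded for large $b$ because $f\to a_1\neq 0$) to bound $|\log f|$, hence $|f|^{\pm 1}$, on the strip $\sigma_z+\epsilon\leq\Re(s)\leq b$; Phragm\'en--Lindel\"of then covers the rest of the half-plane. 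Your version replaces this two-constants machinery with a soft compactness argument (vertical translates, a subsequential limit $g$, and Hurwitz to rule out an ``escaping'' sequence of near-zeros); it is conceptually shorter but non-effective, whereas the paper's gives an explicit constant $e^{C(\epsilon,b)}$. Two small points about your write-up. First, as stated $\sigma^\ast$ may equal $\sigma_z+\epsilon$, which is a boundary point of the region where you have uniform convergence, and Hurwitz requires an interior point; running the whole argument on $\Re(s)\geq\sigma_z+\epsilon/2$ (still strictly to the right of $\sigma_a$ and inside the zero-free half-plane) removes this. Second, you do not actually need Bohr's almost-periodicity theorem: the translates $f(\cdot+it_n)$ are uniformly bounded on $\Re(s)>\sigma_z+\epsilon/2$ by absolute convergence, so Montel's theorem already supplies a locally uniformly convergent subsequence, which is all that Hurwitz and the evaluation $g(\sigma^\ast)=\lim f(\sigma_n+it_n)=0$ require; this makes your argument rest only on standard normal-family theory.
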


\begin{proof}  Let  $b>\sigma_z+\epsilon$.  Since the Dirichlet series $f(s)$ converges absolutely for $\Re(s)>\sigma_z$, there exists a positive constant $C_1(\epsilon)$, such that $\Re(\log f(s))=\log |f(s)|<C_1(\epsilon)$ for $\Re(s)\geq \sigma_z+\epsilon/2$. Next we apply the Borel-Carath\'eodory theorem for $\log f(s)$, which is holomorphic for $\sigma>\sigma_z$, and  the concentric circumferences with centre at $b+i t$ and radii $b-\sigma_z-\epsilon/2$ and $ b-\sigma_z-\epsilon$.  Then in the smaller circumference 
\begin{equation}\label{bci}
|\log f(s)|  \leq  \frac{4b-4\sigma_z-4\epsilon}{\epsilon}C_1(\epsilon)+\frac{4b-4\sigma_z-3\epsilon}{\epsilon} |\log f(b+it)|.
\end{equation}
Observe that the straightforward inequality
$$
|f(b+it)-a_1|\leq \sum_{k=2}^\infty \frac{|a_k|}{k^{b}}\leq \frac{1}{2^{b-\sigma_z-\epsilon}} \sum_{k=2}^\infty \frac{|a_k|}{k^{\sigma_z+\epsilon}}
$$
and the fact that $a_1\neq 0$ implies that for a fixed  sufficiently large $b$ there exists another positive constant $C_2(\epsilon)$ such that $|\log f(b+it)|<C_2(\epsilon)$ for every real $t$. Setting the latter inequality into (\ref{bci}) we conclude that 
$$
|\log f(s)|  \leq (4b-4\sigma_z) \frac{\max\{C_1(\epsilon),C_2(\epsilon)\}}{\epsilon}=C(\epsilon,b), \ \ \ \mathrm{for} \ \ \ \sigma_z+\epsilon \leq \Re(s)\leq b.
$$
Hence
$$
 |f(s)|^{\pm1}\leq e^{C(\epsilon,b)}, \ \  \ \sigma_z+\epsilon \leq \Re(s)\leq b.
 $$
Applying the Phragm\'en-Lindel\"of convexity principle, as stated in  \cite[Theorem 5.53]{IwaKow}, we obtain the desired result.
\end{proof}

\begin{lem}
\label{LemLan}
 Let $f(s)=\sum a_k k^{-s}$ be a Dirichlet series with $a_1\neq 0$ and abscissa of absolute convergente $\sigma_a$. If $f(s)$ does not vanish in $\Re(s)> \sigma_z\geq \sigma_a$ then, 
 for each $\epsilon>0$ and every $\delta \in (0,\epsilon)$ we have 
 $$
\sum_{k=1}^n\frac{ \mu_f(k)}{k^s}=\frac{1}{f(s)}+O_{\epsilon,\delta}(n^{-\delta})
 $$
 uniformly in the half-plane $\Re(s)\geq \sigma_z+\epsilon$. Moreover,
 $$
 \sum_{k=1}^n \mu_f(k)k^{-s}=O_{\epsilon}(1)
 $$
uniformly in $n=1, 2, 3,\dots$  and uniformly in the half-plane $\Re(s)\geq \sigma_z+\epsilon$.
 \end{lem}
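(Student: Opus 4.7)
The plan is to apply the truncated Perron formula (Lemma~\ref{LemTit}) to the Dirichlet series whose $k$-th coefficient is $\mu_f(k)k^{-s}$, i.e.\ whose sum equals $1/f(s+w)$, and then to shift the contour of integration leftward past the simple pole of the integrand at $w=0$.

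First I would fix $s$ with $\sigma=\Re(s)\geq\sigma_z+\epsilon$ and choose $c>0$ large enough that $c+\sigma$ exceeds the abscissa of absolute convergence $\sigma_a'$ of $1/f$, which is finite by Lemma~\ref{LemBat}. Applying Lemma~\ref{LemTit} at $x=n+\tfrac12$ (to sidestep the ``not an integer'' restriction) gives
\begin{equation*}
\sum_{k\leq n}\frac{\mu_f(k)}{k^{s}}\;=\;\frac{1}{2\pi i}\int_{c-iT}^{c+iT}\frac{1}{f(s+w)}\,\frac{(n+\tfrac12)^{w}}{w}\,dw\;+\;E(s,n,T),
\end{equation*}
with $E$ the Perron error. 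I would control $E$ by choosing $T$ a sufficiently large fixed power of $n$ and splitting its defining tail sum into the range of $k$ far from $n$ (where $|\log((n+\tfrac12)/k)|$ is bounded below, so the sum collapses to $O(n^c/T)$) and the range of $k$ near $n$ (treated using polynomial growth of $\mu_f(k)$ coming from $\sigma_a'<\infty$).

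Next I would shift the contour from $\Re(w)=c$ to $\Re(w)=-\delta_1$ for some $\delta_1\in(\delta,\epsilon)$. Throughout the rectangle $\Re(s+w)\geq\sigma_z+\epsilon-\delta_1>\sigma_z$, so by Lemma~\ref{LemLit} the factor $|1/f(s+w)|$ is bounded by a constant $C(\epsilon,\delta_1)$ independent of $\Im(s)$. The only singularity enclosed is the simple pole of $(n+\tfrac12)^w/w$ at $w=0$, whose residue is precisely $1/f(s)$. The two horizontal segments at height $\pm T$ are $O(n^c/T)$ and hence negligible for $T$ a large power of $n$, and the shifted vertical integral satisfies
\begin{equation*}
\left|\frac{1}{2\pi i}\int_{-\delta_1-iT}^{-\delta_1+iT}\frac{1}{f(s+w)}\,\frac{(n+\tfrac12)^{w}}{w}\,dw\right|\;\ll\;n^{-\delta_1}\log T\;\ll_{\epsilon,\delta}\;n^{-\delta},
\end{equation*}
where $\delta<\delta_1$ absorbs the logarithmic factor. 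Combining the residue, the two horizontal pieces, the shifted vertical integral, and the Perron error yields the first assertion. The second assertion, $\sum_{k=1}^{n}\mu_f(k)k^{-s}=O_\epsilon(1)$, then follows at once because Lemma~\ref{LemLit} also gives $|1/f(s)|=O_\epsilon(1)$ on $\Re(s)\geq\sigma_z+\epsilon$, and the error term is itself bounded.

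The main obstacle is uniformity in $\Im(s)$ together with the careful bookkeeping in the Perron error term, particularly the contribution of indices $k$ lying near $n$. The uniformity in $\Im(s)$ ultimately relies on the fact that the bound supplied by Lemma~\ref{LemLit} is independent of the imaginary direction; the near-$n$ portion of the Perron error is tamed by combining polynomial growth estimates on $|\mu_f(k)|$ (available from the finite abscissa of absolute convergence of $1/f$) with a choice of $T$ polynomial in $n$, so that the harmonic-type sum produced by $(1+T|\log(n/k)|)^{-1}$ near $k=n$ contributes at most a power of $\log n$, and hence is absorbed in $O_{\epsilon,\delta}(n^{-\delta})$.
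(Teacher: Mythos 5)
Your proposal is correct and follows essentially the same route as the paper: Perron's formula (Lemma~\ref{LemTit}) applied to $1/f(s+w)$ at a half-integer $x$, a contour shift past the pole at $w=0$ with residue $1/f(s)$, and Lemma~\ref{LemLit} to bound $|1/f(s+w)|$ uniformly on the shifted contour, with $T$ a power of $n$. The only (harmless) differences are bookkeeping: the paper bounds the Perron error in one stroke via $|\log(x/k)|\geq \log 2/(2k)$ for half-integer $x$ and takes $c>\sigma_a(F)+1$, whereas you split into near/far $k$; and your shift to $-\delta_1$ with $\delta<\delta_1<\epsilon$ cleanly absorbs the $\log T$ factor from the vertical integral that the paper's statement $O_{\epsilon,\delta}(x^{-\delta})$ quietly elides.
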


\begin{proof} For $s$ in the half-plane $\Re(s)\geq \sigma_z+\epsilon$, consider the function
$$
F(w)=\frac{1}{f(s+w)}=\sum_{k=1}^\infty \frac{\mu_f(k)}{k^s} \frac{1}{k^w}.
$$
By Lemma \ref{LemBat} the Dirichlet series $F(w)$ has a finite abscissa of absolute convergence $\sigma_a(F)$. Let us apply Lemma \ref{LemTit} for fixed $c>\max \{0, \sigma_a(F)+1\}$ and $x\in 1/2+ \N$ to obtain
$$
\sum_{k<x} \frac{ \mu_f(k)}{k^s}=  \frac{1}{2 \pi i} \int_{c-iT}^{c+iT}F(w)\frac{x^w}{w}dw+O\left( x^c \sum_{k=1}^\infty \frac{|\mu_f(k)|}{k^{\Re(s)+c} (1+T|\log(x/k)|)}\right).
$$
Observe that, for each $k\in \N$, we have $|\log(x/k)|\geq |\log(1+1/2k)|\geq \log 2/(2k)$. Thus
$$
O\left( x^c \sum_{k=1}^\infty \frac{|\mu_f(k)|}{k^{\Re(s)+c} (1+T|\log(x/k)|)}\right)=O\left( \frac{x^c}{T} \sum_{k=1}^\infty \frac{|\mu_f(k)|}{k^{\Re(s)+c-1} }\right)=O\left( \frac{x^c}{T}\right).
$$
Deforming the contour of integration along the segment $[c-iT,c+iT]$ into the polygonal line passing through the points $-\delta-iT$ and $-\delta+iT$ and having in mind that the value of the residue of the integrand at the pole $w=0$ is $1/f(s)$, 
we obtain
$$
\sum_{k<x} \frac{ \mu_f(k)}{k^s}=\frac{1}{f(s)} + \frac{1}{2 \pi i}\left( \int_{-\delta+iT}^{c+iT}+\int_{-\delta-iT}^{-\delta+iT}+ \int_{c-iT}^{-\delta-iT}\right) F(w)\frac{x^w}{w}dw+O\left( \frac{x^c}{T}\right).
$$
By Lemma \ref{LemLit} the first and the third integrals can be estimated from above by 
$$
O_{\epsilon,\delta}\left(\frac{x^c}{T}\right)
$$
and the second one by 
$$
O_{\epsilon,\delta}\left( x^{-\delta} \right).
$$
Choosing $T=x^{c+\delta}$ we obtain, for each $\epsilon>0$, every $\delta \in (0,\epsilon)$ and $s$ in the half plane $\Re(s)\geq \sigma_z+\epsilon$,
$$
\sum_{k<x} \frac{ \mu_f(k)}{k^s}=\frac{1}{f(s)} +O_{\epsilon,\delta}\left( x^{-\delta} \right).
$$
In particular 
$$ 
\sum_{k=1}^n\frac{ \mu_f(k)}{k^s}=\frac{1}{f(s)} +O_{\epsilon,\delta}\left( n^{-\delta} \right)
$$
uniformly in  $\Re(s)\geq \sigma_z+\epsilon$. Choosing $\delta=\epsilon/2$,  by Lemma \ref{LemLit} we obtain
 $$
 \sum_{k=1}^n \mu_f(k)k^{-s}=\frac{1}{f(s)}+O_{\epsilon}\left(n^{-\epsilon/2}  \right)=O_{\epsilon}(1)
 $$
uniformly in $n=1, 2, 3,\dots$  and uniformly in the half plane $\Re(s)\geq \sigma_z+\epsilon$.

\end{proof}

\subsection{Lubinsky's Dirichlet orthogonal polynomials}
\label{LubPol}
For any strictly increasing sequence of real numbers $1= \lambda_1 < \lambda_2 < \lambda_3 < \cdots$, Lubinsky \cite{LubJAT} considered the general Dirichlet polynomials formed by linear combinations of $\lambda_k^{-it}$
and obtained the corresponding orthogonal basis with respect to the arctangent density.
He proved that  the general Dirichlet polynomials 
$\phi_1(t)=1$, $\phi_n(t)=(\lambda_n^{1-it} - \lambda_{n-1}^{1-it})/\sqrt{\lambda_n^{2} - \lambda_{n-1}^{2}}$, $n\geq 2$, satisfy 
$$
\int_\mathbb{R} \phi_n(t) \overline{\phi_m(t)} \frac{dt}{\pi (1+t^2)} = \delta_{nm},\ \ n,m \in \mathbb{N}.
$$ 
Choosing $\lambda_n=n^{1/2}$ and performing a simple change of variables one concludes that the Dirichlet polynomials
$\psi_{n}(t) =  n^{1/2-it} - (n-1)^{1/2-it}$, $n\geq 2$, obey
$$
\frac{1}{2 \pi} \int_\mathbb{R} \psi_{n}(t) \overline{\psi_{m}(t)} \frac{dt}{1/4+t^2} = \delta_{nm},\ \ n,m \in \mathbb{N}.
$$ 
Moreover Lubinsky \cite[(1.20), (1.19)]{LubJAT} proved that the kernel polynomials
$$
K_{n}(u,v) = \sum_{k=1}^n \psi_{k}(u) \overline{\psi_{k}(v)}
$$ 
satisfy the following uniform asymptotic estimates in compact subsets of $\mathbb{R}$, as $n \rightarrow \infty$:
\begin{equation}
\label{LubEst1}
K_{n}(u,u) =  |1/2+i u|^2\, \log n\, (1+o(1))
\end{equation}
and 
\begin{equation}
\label{LubEst2}
|K_{n}(u,v)| \leq \frac{2|1/2+i u|\, |1/2-i v|}{|u-v|} + o(\log n).
\end{equation}
The following simple fact holds: 
\begin{lem}
\label{InvMat}
For every $l\in \mathbb{N}$ and for any distinct numbers $t_1,\ldots, t_l \in \mathbb{R}$, there exists $n(l)\in \mathbb{N}$ such that the self-adjoint matrix 
$H = \left( K_{n}(t_i,t_j) \right)_{i,j=1}^{\ l}$ 
is nonsingular for every $n>n(l)$. Moreover,
\begin{equation}
\label{Red}
\frac{\det H}{(\log n)^l}=  |1/2+i t_1|^2\dots |1/2+i t_l|^2+o(1)\ \ \mathrm{as}\ \ n \rightarrow \infty.
\end{equation}
\end{lem}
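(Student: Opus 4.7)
The plan is to divide the matrix $H$ by the scalar $\log n$ and show that the rescaled matrix converges, entry-by-entry, to a diagonal matrix with strictly positive diagonal entries. Once this is in place, both the nonsingularity and the asymptotic estimate \eqref{Red} follow from continuity of the determinant.

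First I would examine the diagonal entries. Applying \eqref{LubEst1} with $u=t_j$ gives
$$
\frac{K_n(t_j,t_j)}{\log n} = |1/2+it_j|^2 (1+o(1)) \longrightarrow |1/2+it_j|^2,
$$
and this limit is strictly positive since $t_j\in\mathbb{R}$ implies $|1/2+it_j|^2 = 1/4+t_j^2 >0$. Next I would treat the off-diagonal entries. Since the $t_i$ are pairwise distinct, the quantity $|t_i-t_j|$ is bounded away from zero for $i\neq j$, so the bound \eqref{LubEst2} yields
$$
\frac{|K_n(t_i,t_j)|}{\log n} \leq \frac{2|1/2+it_i|\,|1/2-it_j|}{|t_i-t_j|\,\log n} + o(1) = o(1)
$$
as $n\to\infty$. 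Consequently, denoting by $D$ the diagonal matrix with entries $|1/2+it_j|^2$, we have $H/\log n \to D$ entrywise, and therefore in any matrix norm.

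Since the determinant is a continuous (polynomial) function of the entries, this gives
$$
\frac{\det H}{(\log n)^l} = \det\!\left(\frac{H}{\log n}\right) \longrightarrow \det D = \prod_{j=1}^{l} |1/2+it_j|^2,
$$
which is precisely the asymptotic estimate \eqref{Red}. Moreover, because $\det D>0$, there exists $n(l)\in\mathbb{N}$ such that $\det(H/\log n)\neq 0$ for every $n>n(l)$, so $H$ itself is nonsingular in that range.

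There is no real obstacle in this argument; the only point to check carefully is that the error term $o(\log n)$ in \eqref{LubEst2}, once divided by $\log n$, indeed yields $o(1)$ uniformly in the (finite, fixed) set of pairs $(t_i,t_j)$, which is immediate since $l$ and the $t_i$ are fixed.
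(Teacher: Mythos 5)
Your proof is correct and takes essentially the same approach as the paper: both arguments rest on \eqref{LubEst1} for the diagonal entries and \eqref{LubEst2} for the off-diagonal ones. The only cosmetic difference is that the paper expands $\det H$ directly via the Leibniz permutation formula and estimates each term, whereas you normalize by $\log n$ and invoke continuity of the determinant --- which amounts to the same computation.
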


\begin{proof} The Leibniz formula for the expansion of the determinant $H$ in terms of the permutations $\mathcal{P}_l$ and the above asymptotic formulae for the kernel 
polynomials yield  
\begin{eqnarray*}
\det H & = & \sum_{\sigma \in \mathcal{P}_l}  \mathrm{sgn} (\sigma) K_{n,r}(t_1,t_{\sigma(1)})\dots K_{n,r}(t_l,t_{\sigma(l)})\\
& = & |1/2+i t_1|^2\dots |1/2+i t_l|^2 (\log n)^l(1+o(1))+O((\log n)^{l-2})\\
& = & |1/2+i t_1|^2\dots |1/2+i t_l|^2 (\log n)^l+o((\log n)^l),
\end{eqnarray*}
which is equivalent to (\ref{Red}). Thus, obviously $H$ is nonsingular for all sufficiently large $n$.  
\end{proof}

\section{The analog of B\'aez-Duarte's criterion for Dirichlet Polynomials}

%\begin{thm}
%The polynomial  $P(s)=\sum_{k=1}^{m} a_k k^{-s}$, with $a_1\neq 0$,  does not vanish in the semi-plane  $\Re(s)>r$ if and only if $\lim_{n \to \infty}d_{n,r}=0$, where
%$$
%d_{n,r}^2=\inf_{{b_1,\ldots,b_n\in \C}\atop}\int_0^\infty \left|\textbf{1}_{(0,1)}-\sum_{k=1}^n b_k \kappa_r\left(\frac{1}{kx}\right)\right|^2 dx.
%$$
%\end{thm}

\textit{Proof of Theorem} 1. Suppose first that $\lim_{n\to \infty} d_{n,r}=0$. This means that $\textbf{1}_{(0,1)}$ belongs to the closure of the space 
$C_r$, defined by (\ref{Ceta}), in $L^2(0,1)$. Then, given $\epsilon>0$, there exists $h \in C_r$ such that $\|\textbf{1}_{(0,1)} - h \|_2<\epsilon$. By (\ref{Int1f}) 
$$
\int_0^1 (1-h(x))x^{s-1}dx=\frac{ 1-P(s+r-1/2)\sum_{k=1}^{n} b_k k^{-s}} {s}, \ \ \  \Re(s)>0.
$$
It is clear that  $x^{s-1}\in L_2(0,1)$ provided $\Re(s)>1/2$. Moreover,
$$
\| x^{s-1} \|_2^2 = \frac{1}{2\Re(s)-1}.
$$
H\"older's inequality yields 
$$
|1-P(s+r-1/2)\sum_{k=1}^{n}b_k k^{-s}|^2 < \epsilon^2 \frac{|s|^2}{2\Re(s)-1}.
$$
Assume that $P(s)$ possesses a zero $\rho_P$ in the semi-plane $\Re(s)>r$. Letting $\epsilon \to 0$ in the latter inequality we obtain an obvious contradiction. Therefore $P(s)$ does not vanish in  $\Re(s)>r$.

Now suppose that $P(s)$ does not vanish in $\Re(s)>r$. In order to show that $d_{n,r}$ converges to zero  it suffices to show that the function $\textbf{1}_{(0,1)}$  belongs to the closure of $C_r$ in $L^2(0,1)$. However, the latter statement is obviously equivalent  to the fact that $\mathcal{M}(\textbf{1}_{(0,1)})$ belongs to the closure of $\mathcal{M}(C_r)$ in  $L^2(\Re(s)=1/2)$ because the Mellin transform $\mathcal{M}:  L^2(\R_+) \mapsto  L^2(\Re(s)=1/2)$ is an isometry. Thus our aim is to prove that  the closure of $\mathcal{M}(C_r)$ contains  the function $\mathcal{M}(\textbf{1}_{(0,1)})$. 

From now on, up to the end of the present theorem, we set $s=1/2+it$. First we observe that, by (\ref{Int1f}), 
$$
\mathcal{M}(C_r)=\left\{\frac{ P(s+r-1/2)\sum_{k=1}^{n} b_k k^{-s}}{\sqrt{2\pi}s} \, : b_k \in \C \right\}.
$$
For each $n\in \N$ and $\epsilon>0$ we define the function 
$H_{n,\epsilon} \in L^2(\Re(s)=1/2)$ by
$$
H_{n,\epsilon} (s)=\frac{P(s+r-1/2)}{{\sqrt{2\pi}s}}\sum_{k=1}^n \frac{\mu_P(k)}{k^{r+\epsilon-1/2}}\frac{1}{k^s},
$$
where $\mu_{P}(k)$ are the coefficients of the expansion of $1/P$ in a formal Dirichlet series. Clearly  $H_{n,\epsilon}$ belongs to $\mathcal{M}(C_r)$. Since by hypothesis the Dirichlet Polynomial $P(s)$ does not vanish when $\Re(s)>r$ and converges absolutely in this semi-plane, for every fixed $\epsilon$ we can apply Lemma \ref{LemLan} to obtain 
$$
\lim_{n\to \infty} H_{n,\epsilon}(s)=H_{\epsilon}(s),
$$
where
$$
H_{\epsilon}(s):=\frac{1}{\sqrt{2\pi}s}\, \frac{P(s+r-1/2)}{P(s+r+\epsilon-1/2)}.
$$

It follows also from Lemma \ref{LemLan}  that  for every fixed $\epsilon$ the modulus of $H_{n,\epsilon}$ is uniformly bounded with respect to $n$ by a function from $L^2(\Re(s)=1/2)$.
Hence, by Lebesgue's dominated convergence theorem, for every fixed $\epsilon>0$, 
$$\lim_{n\to \infty} H_{n,\epsilon}=H_\epsilon,$$
in the $L^2(\Re(s)=1/2)$ norm. Since $H_{n,\epsilon} \in \mathcal{M}(C_r)$ for every fixed $\epsilon>0$, then $H_{\epsilon}$ belongs to the closure of  $\mathcal{M}(C_r)$. 

By the definition of $H_\epsilon$,
$$
\lim_{\epsilon \to 0} H_\epsilon(s)=\frac{1}{\sqrt{2\pi}s}=\mathcal{M}[\textbf{1}_{(0,1)}(x);s].
$$
Next we  show that the modulus of $H_\epsilon$ is uniformly bounded with respect to $\epsilon$, as $\epsilon \to 0$,  by a function from $L^2(\Re(s)=1/2)$. 
Recall the Hadamard factorisation of $P(z)$, 
$$
P(z)=z^q e^{a+bz} \prod_{\rho_P}\left( 1-\frac{z}{\rho_P}\right)e^{z/\rho_P},
$$
where $\sum_{\rho_P}|\rho_P|^{-2}$ converges and that the infinite product is uniformly convergent on compacts sets of $\C$. The hypothesis that $P(z)$ does not vanish in $\Re(z)>r$ implies that
$$
\left| \frac{P(r+it)}{P(r+\epsilon+it)}\right|\leq \left| \frac{r+it}{r+\epsilon+it}\right|^q \exp(\epsilon|\Re(b)|+\epsilon |\inf_{\rho_P} \Re(\rho_P)| \sum_{\rho_P}|\rho_P|^{-2}).
$$
Next we restrict $\epsilon \in (0,1)$ if $r\geq 0$ and  $\epsilon \in (0,|r|/2)$ otherwise and recall that $|\inf_{\rho_P} \Re(\rho_P)| <\infty$ holds for any Dirichlet polynomial $P(z)$ with zeros $\rho_P$(see (\ref{00}). Thus, there exist a positive constant $C$ such that  
$$
\left| \frac{P(r+it)}{P(r+\epsilon+it)}\right|\leq C
$$
when $\epsilon \to 0$. This estimate yields the desired upper bound
$$
| H_\epsilon(s) | \leq \frac{C}{|s|},
$$
which obviously belongs to $L^2(\Re(s)=1/2)$. Hence, by Lebesgue's theorem $\mathcal{M}(\textbf{1}_{(0,1)})$ belongs to the closure of  $\mathcal{M}(C_r)$ in the $L^2(\Re(s)=1/2)$ norm and this completes the proof.

\section{The analog of Burnol's lower bound for Dirichlet Polynomials}
%\begin{thm} Seja  $P(s)=\sum_{k=1}^{m} a_k k^{-s}$ um polinômio de Dirichlet de grau $m$ e  $a_1\neq 0$.  Então
%$$
%\liminf_{n\rightarrow \infty}\ d_n^2(r) \log n \geq  C,
%$$
%com 
%$$
%C=\sum_{\Re(\rho_P)=r} \frac{1}{|\rho_P-r+1/2|^2}
%$$ 
%e $\rho_P$ variando  no conjunto de zeros de $P(s)$ em $\Re(s)=r$. Caso o polinomio nao se anule na reta  $\Re(s)=r$ a constante $C$ deve ser definida com valor zero. 
%\end{thm}
{\em Proof of Theorem} \ref{BE}.  If the polynomial $P(s)$ does not have zeros on the line $\Re(s)=r$ the constant $C$ must be zero, as in the statement of the theorem. 

Suppose now that $P(s)$ possesses $l$ distinct zeros $r+it_1$, $\dots$, $r+it_l$ on the line $\Re(s)=r$. By (\ref{Int1f}) and the fact that the Mellin transform is an isometry 
$$
d_{n,r}^2=\inf_{{A_n\in \mathcal{D}_n}\atop} \frac{1}{2\pi} \int_{\Re(s)=1/2}\left| \frac{1-P(s+r-1/2)A_n(s)}{s} \right|^2 |ds|,
$$
where $\mathcal{D}_n$ is the space of Dirichlet polynomials of the form $\sum_{k=1}^n b_k k^{-s}$. Let us consider $Q_r(s)=P(s+r-1/2)$ and the subspace $\mathcal{D}_{n,l} \subset \mathcal{D}_n$ composed by all Dirichlet polynomials  $B_n(s)$ which satisfy the $l$ interpolation conditions $B_n(1/2+it_1)=1$, $\dots$, $B_n(1/2+it_l)=1$. Since $1-Q_r(s)A_n(s)$ is a DP of order $mn$ 
and obeys the same interpolations property, then
$$
d_{n,r}^2\geq \inf_{{B\in \mathcal{D}_{nm,l}}\atop} \frac{1}{2\pi} \int_{\Re(s)=1/2}\left| \frac{B(s)}{s} \right|^2 |ds|.
$$
Observe that $\mathcal{D}_{nm,l}$ is nonempty because  it contains $B(s)\equiv1$.  If $B_{nm} \in \mathcal{D}_{nm,l}$ then $B_{nm}(1/2+it_j)=1$, $j=1,\dots,l$ and $B_{nm}(1/2+it)=\sum_{k=1}^{nm}b_k\psi_k(t)$, where $\psi_k$ are defined in Section \ref{LubPol}. The  interpolation conditions can be rewritten in the form  
$$
A_{l,nm}\textbf{B}=\mathbf{1}_l, 
$$
where
$$
A = A_{l,nm}=\left(\begin{array}{cccc}

\psi_1(t_1) & \psi_2(t_1) & \dots & \psi_{nm}(t_1)\\

\psi_1(t_2) & \psi_2(t_2) & \dots & \psi_{nm}(t_2)\\

\vdots                    &  \vdots                   & & \vdots \\

\psi_1(t_l) & \psi_2(t_l) & \dots & \psi_{nm}(t_l)

\end{array}\right),
$$
$\textbf{B}=(b_1,\ldots, b_{nm})^T$ and $\mathbf{1}_l$ is the column vector of size $l$ whose entries are all equal to one.
Then obviously 
$$
 \|B_{nm}(1/2+it) \|_{L^2(\R,\omega)}^2= |b_1|^2+|b_2|^2+\dots+|b_{nm}|^2,  
$$
where $L^2(\R,\omega)$ is the weighted $L^2$ space with weight $\omega(t)= (1/2\pi)(1/4+t^2)$. Thus the problem reduces to minimize $\|\textbf{B}\|^2$, $\textbf{B}\in \mathbb{C}^{nm}$, subject to
$A_{l,nm}\textbf{B}=\mathbf{1}_l$. The solution of this problem is known to be given by the solution of the following linear system (see \cite[Theorem 2.19]{Bre} and \cite{GreRos, Nev}): 
\begin{eqnarray*}
A\,\textbf{B} & = & \mathbf{1}_l,\\
\textbf{B} & = & A^\ast \lambda,\ \ \lambda \in \mathbb{C}^l.
\end{eqnarray*}
By Lemma \ref{InvMat} there exists $n(l)\in \mathbb{N}$ such that the self-adjoint matrix $H = A A^*= \left( K_{nm}(t_i,t_j) \right)_{i,j=1}^{\ l}$ is nonsingular for every $n>n(l)$. Hence, for $n>n(l)$, the above system 
has the unique solution  $\tilde{\textbf{B}} = A^\ast H^{-1} \mathbf{1}_l$. Since  $A\, A^\ast = H$ and $H$ is self-adjoint, then
$$
 \|\tilde{B}_{nm}(1/2+it) \|_{L^2(\R,\omega)}^2=   \tilde{\textbf{B}}^\ast \tilde{\textbf{B}} =  \mathbf{1}_l^\ast (H^{-1})^\ast  A\, A^\ast H^{-1} \mathbf{1}_l = \mathbf{1}_l^\ast H^{-1} \mathbf{1}_l. 
$$
Thus, by the Cramer formula for the inverse matrix
$$
  \|\tilde{B}_{nm}(1/2+it) \|_{L^2(\R,\omega)}^2 = \sum_{i,j=1}^l (-1)^{i+j} \frac{\det H_{ij}}{\det H},
$$
where $H_{ij}$ are the $(i,j)$-th cofactors of $H$. 
Moreover, the asymptotic relations (\ref{LubEst1}) and (\ref{LubEst2}) and Lemma  \ref{InvMat}  yield the following ones, as $n \rightarrow \infty$:
\begin{eqnarray*}
\det H_{jj} & \sim & (\log nm)^{l-1}\, \frac{|1/2+i t_1|^2\dots |1/2+i t_l|^2}{|1/2+i t_j|^2}\\
\det H_{ij} & = & O((\log nm)^{l-2}),\ \ i\neq j\\
\det H & \sim &  (\log nm)^l  |1/2+i t_1|^2\dots |1/2+i t_l|^2.
\end{eqnarray*}
Hence, 
$$
\|\tilde{B}_{nm}(1/2+it) \|_{L^2(\R,\omega)}^2 \sim \frac{1}{\log n+\log m} \sum_{j=1}^l  \frac{1}{1/4+t_j^2},\ \ \mathrm{as}\ \ n\rightarrow \infty,
$$
Since $d^2_{n,r}\geq  \|\tilde{B}_{nm}(1/2+it) \|_{L^2(\R,\omega)}^2$, we obtain
$$
\liminf_{n\to \infty} d^2_{n,r} \log n \geq  \sum_{j=1}^l  \frac{1}{1/4+t_j^2},\ \ \mathrm{as}\ \ n\rightarrow \infty. 
$$
Observe that the set of zeros $r+it_1$, $\dots$, $r+it_l$ was chosen to be arbitrary. Hence,  
$$
\liminf_{n\to \infty} d^2_{n,r} \log n \geq  C,\ \ \mathrm{as}\ \ n\rightarrow \infty, 
$$
where
$$
C=\sum_{\Re(\rho_P)=r}  \frac{1}{|\rho_P-r+1/2|^2}.
$$
$\Box$ 

Finally we comment the result of the theorem for the three possible situations for the location of the zeros of the Dirichlet polynomial.

 1. When $P(s)$ possesses a zero $\rho_P$ in the semi-plane $\Re(s)>r$, then we may write  $\rho_P=r+\delta+it_0$ with $\delta>0$. The function $f(x):=x^{-1/2+\delta-it_0}$ belongs to $L^2 (0,1)$ and by  
 (\ref{Int1f}) we have
 $$
 \int_0^1 \kappa_r \left(\frac{1}{kx}\right)\overline{f(x)}dx=\frac{P(r+\delta+it_0)}{1/2+\delta+it_0}k^{-1/2-\delta-it_0}=0.$$
 Therefore $f$ belongs to the orthogonal complement of the space  $\mbox{span}\{\kappa(1/kx)):k=1,\dots,n\}$ in $L^2(0,1)$  for every $n\in \N$. Thus
 $$
 d^2_{n,r}\geq \frac{ |\int_0^1 \textbf{1}_{(0,1)}(x)\overline{f(x)}dx|^2}{\| f\|^2_{L^2(0,1)}} =\frac{2\delta}{|\rho_P-r+1/2|^2},
 $$
that is, $\liminf_{n\to \infty} d^2_{n,r} \log n = \infty$ and the lower bound proved in Theorem \ref{BE} is superficial. 

2. When all the zeros of $P(s)$ belong to the semi-plane $\Re(s)\leq r$ but there is al least one zero on the line $\Re(s)=r$ the lower bound proved in the latter theorem is useful because Teorema \ref{BP} yields $\lim_{n\to \infty}d^2_{n,r}=0$ and Teorema \ref{BE} provides a lower bound for the speed of convergence of $d^2_{n,r}$. Moreover, employing a slightly modified argument from \cite{BetConFar} we can show that the lower bound represents the true order of decrease of $d_{n,r}$, that is, 
$$
d^2_{n,r}\sim \frac{C}{\log n}, 
$$
for at least some specific Dirichlet polynomials.

3. When all the zeros of $P(s)$ belong to $\Re(s)<r$ the lower bound is trivial because $C=0$.  In this case there are two possibilities: either $\sup_{\rho_P} \{\Re(\rho_P)\}<r$ or  $\sup_{\rho_P} \{\Re(\rho_P)\}=r$. 

In the first one we may apply Lemma \ref{LemLan} with $f(s)=P(s+r-1/2)$, $\sigma_z=\sup_{\rho_P} \{\Re(\rho_P)\}-r+1/2$ and $\epsilon =r-\sup_{\rho_P} \{\Re(\rho_P)\}$ to obtain
$$
\sum_{k=1}^{n}\frac{\mu_{f}(k)}{k^{1/2+it}}=\frac{1}{P(r+it)}+O_{r,\delta}(n^{-\delta})
$$
for each $\delta< r-\sup_{\rho_P} \{\Re(\rho_P)\}$. As we have already mentioned in the proof of Theorem \ref{BE},
$$
d_{n,r}^2=\inf_{{A_n\in \mathcal{D}_n}\atop} \frac{1}{2\pi} \int_{\Re(s)=1/2}\left| \frac{1-P(s+r-1/2)A_n(s)}{s} \right|^2 |ds|.
$$ 
Then choosing $A_n(s)=\sum_{k=1}^{n} \mu_{f}(k) k^{-s}$ we obtain
$$
d_{n,r}^2\leq \frac{1}{2\pi} \int_{\R} \frac{\left| P(r+it)O_{r,\delta}(n^{-\delta}) \right|^2}{1/4+t^2} dt= O_{r,\delta}(n^{-\delta}).
$$
Performing only a slight modification of the arguments it is possible to show that 
$$
d^2_{n,r}\leq o(n^{-\delta})
$$
for each $\delta< r-\sup_{\rho_P} \{\Re(\rho_P)\}$. On the other hand, using some  estimates established in \cite{LubJAT} and reasonings similar to those in the proof of  
Theorem \ref{BE}, it can be shown that 
$$
d^2_{n,r}\neq o(n^{-\delta})
$$
for each  $\delta> 2r-2\sup_{\rho_P} \{\Re(\rho_P)\}$.

The second situation, that is when $\sup_{\rho_P} \{\Re(\rho_P)\}=r$, seems to be more complicated and we expect that a good lower bound for $d_{n,r}$ should depend not only 
on supremum of the real parts of the zeros of $P$ but most probably on their further properties.

\end{document}